\setlist[itemize]{noitemsep, topsep=1pt, leftmargin=20pt}
\newcommand\bcdot{\ensuremath{
  \mathchoice
   {\mskip\thinmuskip\lower0.2ex\hbox{\scalebox{1.6}{$\cdot$}}\mskip\thinmuskip}}
   {\mskip\thinmuskip\lower0.2ex\hbox{\scalebox{1.6}{$\cdot$}}\mskip\thinmuskip}
   {\lower0.3ex\hbox{\scalebox{1.2}{$\cdot$}}}
   {\lower0.3ex\hbox{\scalebox{1.2}{$\cdot$}}}
}
\theoremstyle{plain}
\newtheorem{theo}{Theorem}[section]
\newtheorem{prop}[theo]{Proposition}
\theoremstyle{definition}
\newtheorem{rem}[theo]{Remark}
\newtheorem{example}[theo]{Example}
\newtheorem{definition}[theo]{Definition}
\theoremstyle{plain}
\newtheorem{lemma}[theo]{Lemma}
\newtheorem{theorem}[theo]{Theorem}
\theoremstyle{definition}
\newtheorem{remark}[theo]{Remark}
\newtheorem{counterexample}[theo]{Counterexample}
\theoremstyle{plain}
\renewcommand{\=}{:=}
\renewcommand{\a}{\alpha}
\renewcommand{\b}{\beta}
\renewcommand{\d}{\delta}
\newcommand{\e}{\varepsilon}
\newcommand{\f}{\varphi}
\newcommand{\g}{\gamma}
\newcommand{\h}{\eta}
\renewcommand{\l}{\lambda}
\newcommand{\w}{\omega}
\newcommand{\D}{\Delta}
\renewcommand{\L}{\Lambda}
\newcommand{\W}{\Omega}
\newcommand{\bR}{\mathbb{R}}
\newcommand{\fO}{\mathsf{O}}
\newcommand{\cB}{\mathcal{B}}
\newcommand{\cC}{\mathcal{C}}
\newcommand{\eF}{\EuScript{F}}
\newcommand{\eM}{\EuScript{M}}
\newcommand{\eP}{\EuScript{P}}
\newcommand{\p}{\partial}
\newcommand{\td}{\mathtt{d}}
\renewcommand{\square}{\kern1pt\vbox
{\hrule height 0.6pt\hbox{\vrule width 0.6pt\hskip 3pt \vbox{\vskip
6pt}\hskip 3pt\vrule width 0.6pt}\hrule height0.6pt}\kern1pt}
\renewcommand{\=}{\  \raisebox{0.15mm}{:} {=} \ }
\newcommand{\diff}{\operatorname{d\!}}
\DeclareMathOperator\Tr{Tr}
\DeclareMathOperator\Sym{Sym}
\DeclareMathOperator\USC{USC}
\DeclareMathOperator\LSC{LSC}
\DeclareMathOperator\Id{Id}
\DeclareMathOperator\Exp{\operatorname{Exp}}
\newcommand{\inj}{\operatorname{inj}}
\newcommand{\wt}{\widetilde}
\newcommand{\ol}{\overline}
\newcommand{\ul}{\underline}
\newcommand{\zero}{\operatorname{o}}
\def\<#1,#2>{\langle\,#1,\,#2\,\rangle}
\newcommand{\Aac}{\`A}
\newcommand{\eac}{\`e}
\newcommand{\Math}{{\it Mathematica\raise5 pt\hbox{$\scriptscriptstyle \circledR$}7}}
\newcommand{\n}{\nabla}
\newcommand{\beq}{\begin{equation}}
\newcommand{\eeq}{\end{equation}}
\def\<#1,#2>{\langle\,#1,\,#2\,\rangle}
\newcommand{\arr}{\begin{array}{rlll}}
\newcommand{\ea}{\end{array}}
\newcommand{\bea}{\begin{eqnarray}}
\newcommand{\eea}{\end{eqnarray}}
\newcommand{\bean}{\begin{eqnarray*}}
\newcommand{\eean}{\end{eqnarray*}}
\def\sideremark#1{\ifvmode\leavevmode\fi\vadjust{%            The remark
\vbox to0pt{\hbox to 0pt{\hskip\hsize\hskip1em%               will appear only
\vbox{\hsize3cm\tiny\raggedright\pretolerance10000%          on the side
\noindent #1\hfill}\hss}\vbox to8pt{\vfil}\vss}}}%           in 3cm
\newcounter{ssig}
\newcounter{ttig}
\renewcommand{\div}{\operatorname{div}}
\title[Maximum principles on Riemannian manifolds]{A note on the strong maximum principle for fully nonlinear equations on Riemannian manifolds}
\author{Alessandro Goffi and Francesco Pediconi}
\subjclass[2010]{Primary: 35B50, 35J70, 58J60; Secondary: 35D40, 49L25}
\keywords{Fully nonlinear equation, degenerate elliptic equation, Hopf boundary lemma, Riemannian manifold, strong maximum principle, strong comparison principle.}
\thanks{The first-named author is member of GNAMPA of INdAM and has been partially supported by the Fondazione CaRiPaRo Project ``Nonlinear Partial Differential Equations: Asymptotic Problems and Mean-Field Games''. The second-named author is member of GNSAGA of INdAM and has been supported by the project PRIN 2017 ``Real and Complex Manifolds: Topology, Geometry and holomorphic dynamics'' (code 2017JZ2SW5).}
\begin{document}
\begin{abstract} We investigate strong maximum (and minimum) principles for fully nonlinear second order equations on Riemannian manifolds that are non-totally degenerate and satisfy appropriate scaling conditions. Our results apply to a large class of nonlinear operators, among which Pucci's extremal operators, some singular operators like those modeled on the $p$- and $\infty$-Laplacian, and mean curvature type problems. As a byproduct, we establish new strong comparison principles for some second order uniformly elliptic problems when the manifold has nonnegative sectional curvature.
\end{abstract}

\maketitle

%\tableofcontents

\section{Introduction}
This paper is devoted to analyze strong maximum and comparison principles for viscosity solutions to fully nonlinear second order equations on (finite dimensional) Riemannian manifolds $(M^n,g)$ of the general form \beq F(x,u,Du,D^2u)=0 \quad \text{ in $\W \subset M$ ,}  \label{1}\eeq being $\W$ a connected open subset and $F: J^2M \to \bR$ proper, namely non-decreasing in the second entry and non-increasing in the last entry. Here, $J^2M$ denotes the $2$-jet bundle over $M$ (see Section \ref{sec;notation}). More precisely, we are concerned with proving the following main results:
\begin{itemize}[leftmargin=40pt]
\item[(SMP)] Any upper semicontinuous viscosity subsolution to \eqref{1} attaining a nonnegative interior maximum is constant (see Theorem \ref{SMP});
\item[(SmP)] Any  lower semicontinuous viscosity supersolution to \eqref{1} attaining a nonpositive interior minimum is constant (see Theorem \ref{SmP});
\end{itemize}
and one of their consequences, namely the following tangency principle:
\begin{itemize}[leftmargin=40pt]
\item[(SCP)] Let $u,v$ be respectively a viscosity sub- and supersolution to \eqref{1} such that $u\leq v$ in $\Omega$ and $u(x_0)=v(x_0)$ at some $x_0 \in \W$, then $u\equiv v$ in the whole $\Omega$ (see Proposition \ref{SCP2}).
\end{itemize} \smallskip

The first aim of this note is to complete and extend well-known results valid in the classical Euclidean case to the more general realm of Riemannian manifolds. Our second motivation is then to lay the groundwork for investigating (one-side) Liouville-type results for fully nonlinear elliptic problems as \eqref{1} on general Riemannian structures. This would be mainly inspired by the recent nonlinear studies in \cite{BC}, which make use of (SMP) and (SmP), and the linear Liouville properties given in \cite{Gry}, which are intimately connected with the stochastic completeness of the manifold (cf \cite[Sect 13.2]{Gry}), see also \cite{MV,Punzo}. These latter properties will be matter of a future research by the authors and we believe this note would be a starting point to address these issues. 
Before stating our main results, we begin with a glimpse on the literature on maximum principles, starting with (SMP)-(SmP) and concluding with (SCP). \smallskip

The (SMP) and (SmP) for linear equations $$Lu=-\Tr(A(x)D^2u)+b(x)\cdot \n u+c(x)u=0 \quad \text{ in $\W \subset \bR^n$ } $$ with $A$ uniformly elliptic, $b$ bounded and continuous and $c$ nonnegative and bounded, dates back to E. Hopf and it is a consequence of his Boundary Point Lemma (see e.g. \cite{GT,PW,PS}). A refinement of this procedure was implemented by E. Calabi for semicontinuous viscosity solutions to linear equations \cite{Calabi}. Instead, the literature on quasi-linear problems in $\mathbb{R}^n$, mostly modeled on $p$-Laplacian operators, is huge and we refer the interested reader to \cite{PS} for a comprehensive exposition and \cite[Thm 8.1]{PS2} for maximum principles under mild assumptions on the quasi-linear operator. \smallskip

Let us now mention related results for fully nonlinear problems close to our model PDEs. The (SMP) and (SmP) were found by Caffarelli and Cabr\'e \cite[Prop 4.9]{CC} as a consequence of the weak Harnack inequality. Under lower ellipticity conditions on the nonlinear operator $F$, they were derived in \cite{KK} and \cite{BDL1}, the latter covering even many examples of quasi-linear equations. Descriptions via control theoretic and probabilistic arguments of the propagation sets of maxima (and minima) for Hamilton-Jacobi-Bellman equations were provided in \cite{BDL2,BDL3}. Later results for fully nonlinear uniformly elliptic equations with linear gradient growth can be found in \cite[Thm 5.1]{IY}, see also \cite{CLN} for other maximum principles for nonlinear elliptic operators. Finally, recent contributions for fully nonlinear PDEs over H\"ormander vector fields are given in \cite{BG1}. More recent progresses have been made on Hessian operators (mostly for truncated Laplacians), and they can be found in \cite{BGI1,BGI2}.

The literature of the corresponding equations on Riemannian manifolds is poor, although (SMP) and (SmP) are of local nature and one does not usually expect their failure depending on the properties of the considered manifold. For linear elliptic problems, the first result appeared in \cite{Calabi}, see also \cite[Prop 6.84]{Besse} and \cite[Appx A]{PPS} for weak solutions to the manifold Laplace equation.
The case of nonlinear equations modeled on the $p$-Laplacian perturbed by zeroth order terms has been addressed in \cite[Sect 8.5]{PS}. A more general treatment of quasi-linear equations, involving even mean-curvature operators, can be found in \cite[Sect 4.2]{BMPR} under the name of finite maximum principle and in \cite[Thm 3.10]{AMR} for PDEs perturbed by first order terms, see also \cite{PRS} and the references therein. In the context of fully nonlinear operators, the only available results we are aware of in the context of Riemannian manifolds are those established by Harvey and Lawson \cite{HLsmp}, that however cover the case of Hessian-type equations only. 

Our approach is inspired by the landmark papers \cite{BDL1,BDL2,BDL3} and borrows several (viscosity) techniques from these works. In particular, we stress that the procedure implemented here is structured in the spirit of that used by Calabi \cite{Calabi} and the nonlinear one by Bardi and Da Lio \cite{BDL1}, where solutions are meant in the weak viscosity sense, unlike the distributional framework developed in \cite{AMP,PS}. Our main result (SMP) that we prove in Theorem \ref{SMP} for \eqref{1} is based on a classical barrier-type argument for viscosity solutions and works for those operators $F$ that are elliptic according to the following notion $$\sup_{\a>0}F(x,0,q,Q-\a\,q{\otimes}q)>0 \quad \text{ for any } x\in\W \,\, , \,\,\, q\in T^*_xM , \, q\neq0 \,\, , \,\,\, Q\in\Sym^2(T_xM) $$ and fulfilling a scaling condition like $$F\big(x, c s, c q, cQ)\geq \eta(c)F\big(x, s, q, Q) \quad \text{ for any } s \in [-1,0] \text{ and } c \in (0,1] \,\, ,$$ for some function $\eta>0$ (see Section \ref{sec;notation} for detailed assumptions). In particular, we highlight that the ellipticity condition reduces to the non-degeneracy property identified by Bardi and Da Lio \cite{BDL1} in the Euclidean setting whenever the manifold has nonnegative sectional curvature (see Remark \ref{SMPsimp}). In Section \ref{URiem} and Section \ref{sec;SMP} we list several examples to which our main Theorem \ref{SMP} applies, among which equations driven by Pucci's, $p$-Laplace Beltrami, $\infty$-Laplacian and mean curvature operators, together with various singular nonlinear operators modeled on the $p$- and $\infty$-Laplacian for which the viscosity theory on Riemannian manifolds has been developed, see e.g. \cite{Kim}. We will also provide a weak version of the Hopf boundary Lemma for viscosity solutions in Theorem \ref{hopf}. Furthermore, symmetric ellipticity and scaling conditions lead to a version of the (SmP), that is stated in Theorem \ref{SmP} for reader's convenience. \smallskip

Let us now pass to discuss property (SCP), for which the literature is less wide. For classical solutions to linear equations it is a straightforward byproduct of the (SMP). In the case of nonlinear problems, even degenerate, some additional conditions on the operator are needed. Property (SCP) for smooth solutions to quasi-linear equations can be found in \cite[Thm 2.2.2]{PS} under the name of tangency principle. In the fully nonlinear case, strong comparison principles have been addressed in \cite[Rem 3]{BDL1} (when one of the functions is $\cC^2$ via the arguments in \cite{KK}), by Ishii-Yoshimura in \cite[Thm 5.3]{IY} for second order uniformly elliptic equations with Lipschitz growth in $(u,Du)$, N.S. Trudinger \cite{Tru} for Lipschitz continuous viscosity solutions, Y. Giga and M. Onhuma \cite{GO} for semicontinuous solutions, see also \cite[Thm 3.1]{Patrizi} for other results on uniformly elliptic operators and  \cite{OS} for the case of mean-curvature equations, all of them in the Euclidean setting. More recently, strong comparison principles have been proved in \cite{BG1} for degenerate Hamilton-Jacobi-Bellman equations, covering also problems structured over H\"ormander vector fields, see also \cite{LiWang} for tangency principles results for fully nonlinear problems arising in conformal geometry.

In Section \ref{sec;SCP}, we first set up a simple proof for Pucci's extremal operators when one of the involved functions is smooth (see Lemma \ref{SCP1}) and then we prove (SCP) following a strategy implemented in \cite{IY} based on a combination of the (SMP) with the weak comparison principle in \cite{Azagra} that yields the result in the uniformly elliptic case. In particular, we assume $F$ intrinsically uniformly continuous, uniformly elliptic in the sense that
\[
\l \Tr(P) \leq F(x,s,q,Q)-F(x,s,q,Q+P) \leq \L \Tr(P)
\]
for any $x \in M$, $(s,q,Q) \in \wt{J}^2_xM$, $P \in \Sym^2(T_xM)$ with $P \geq 0$ and ellipticity constants $0<\l \leq \L$, and satisfying
\[
|F(x,s_1,q_1,Q)-F(x,s_2,q_2,Q)|\leq C\,\big(|s_1-s_2|+|q_1-q_2|\big)
\]
for any $x \in M$ and $(s_1,q_1,Q),(s_2,q_2,Q) \in J^2_xM$, and prove (SCP) whenever the manifold has nonnegative sectional curvature, see Remark \ref{seccurv} for further comments on this restriction. This latter result is contained in Proposition \ref{SCP2} and essentially relies on proving that the difference $w=u-v$ is a subsolution to a sort of linearized equation, following the path of \cite{IY}.

As far as the weak comparison principle is concerned, the literature is huge when $F$ is strictly proper since the results encompass even first order problems \cite{BCD,CIL}. In the more general case of Riemannian manifold, the only available contributions we are aware of in the second order case have been obtained in \cite{Azagra}. In particular, when the manifold has nonnegative sectional curvature, weak comparison principles are obtained via the Riemannian counterpart of the Euclidean theorem on sums \cite{CIL}, see \cite{Azagra}. However, when $(M,g)$ has negative curvature, the weak comparison continues to hold under a further uniform continuity assumption (see \cite[cond (2b) on Cor 4.10]{Azagra} and Remark \ref{seccurv} below). We refer also to \cite[Appx A]{MariPessoa} for further comparison principles on Riemannian manifolds for quasi-linear problems.

Under the mere properness of the operator one needs some form of ellipticity and the minimal conditions seem to be an open problem, see \cite{BB,Je2,KK,KK2} and the more recent \cite{BG1,BM}.

We finally mention that other kind of maximum principles (at infinity) on Riemannian manifolds can be found in the monographs \cite{Grybook,PRS}, while Alexandroff-Bakelman-Pucci estimates together with maximum principles for subsolutions of linear problems were obtained in \cite{C}. \smallskip

\noindent {\it Plan of the paper}. Section \ref{sec;prel} is devoted to some preliminaries on Differential Geometry. Section \ref{sec;fully} introduces fully nonlinear second order equations on Riemannian manifolds together with the ellipticity definitions used throughout the paper. Section \ref{sec;SMP} comprehends the proof of the (SMP) and (SmP) with a list of prototype examples, while Section \ref{sec;SCP} applies the results to deduce (SCP). \smallskip

\noindent{\it Acknowledgement.} We thank Prof. Hitoshi Ishii for providing us with a copy of \cite{IY}. \medskip

\section{Preliminaries of Riemannian Geometry} \label{sec;prel}

Let $(M,g)$ be a $\cC^3$-Riemannian $n$-manifold, i.e. a connected smooth manifold $M$ of dimension $n$ endowed with a symmetric $(0,2)$-tensor field $g \in \cC^3(M,\Sym^2(T^*M))$ which induces at each point $x \in M$ a positive definite inner product $g_x$ on the tangent space $T_xM$. We denote by $\diff\,$ the exterior derivative on $M$ and by $D$ the Levi-Civita covariant derivative on $(M,g)$. We call {\it domain} any connected open subset $\W \subset M$. If the closure $\ol{\W} \subset M$ is compact, we say that $\W$ has compact closure. 

From now on, all the manifolds are assumed to be connected, all the Riemannian metrics are assumed to be of class $\cC^3$ and they are not necessarily complete. \smallskip

Let $\W \subset M$ be a domain.  Given a function $\f \in \cC^2(\W,\bR)$, it holds $D\f=\diff \f$. Moreover, we denote by $\n \f \in \cC^1(\W,TM|_{\W})$ the {\it gradient of $\f$}, which is defined by $g(\n \f,X)\=\diff \f(X)$ for any $X \in \cC^{\infty}(\W,TM|_{\W})$, and by $D^2 \f \in \cC^0(\W,\Sym^2(T^*M)|_{\W})$ the {\it Hessian of $\f$}, i.e. $D^2\f \= D(\diff \f)$. From the very definition, it follows that $$D^2\f(X,Y) = g(D_X(\n \f),Y) = X(\diff\f(Y)) - \diff\f(D_XY)$$ for any pair of smooth vector fields $X,Y  \in \cC^{\infty}(\W,TM|_{\W})$. \smallskip

The Riemannian manifold $(M,g)$ is in particular a separable, locally compact length space by means of the induced Riemannian distance $\td:M\times M \to \bR$ (see e.g. \cite[Sect 2]{Ped}). We will denote by $B(x,r)$ the open metric ball centered at $x \in M$ of radius $r>0$ in $M$, by $S(x,r) \=\p B(x,r)$ its boundary and by $\ol{B}(x,r) \= \ol{B(x,r)}$ its closure. Notice that, by the classical Hopf-Rinow Theorem, if $(M,g)$ is complete, then a domain has compact closure if and only if it is bounded (see e.g. \cite[Thm 2.5.28]{BBI}). Moreover, for any $x \in M$, we denote by $\Exp_{x}$ the Riemannian exponential map at $x$ and by $\inj(x)$ the injectivity radius of $(M,g)$ at $x$.

Let $x_{\zero} \in M$ be a point. We denote by $|\cdot|$ the norm induced by $g$ on the tangent bundle $TM$ and by $B(0_{x_{\zero}},\inj(x_{\zero})) \subset T_{x_{\zero}}M$ the ball centered at the origin of the tangent space $T_{x_{\zero}}M$ with radius $\inj(x_{\zero})$. Then, it is known that the restricted map $$\Exp_{x_{\zero}}: B(0_{x_{\zero}},\inj(x_{\zero})) \subset T_{x_{\zero}}M \to B(x_{\zero},\inj(x_{\zero})) \subset M$$ is a diffeomorphism of class $\cC^2$ and the function \beq f_{x_{\zero}}:B(x_{\zero},\inj(x_{\zero})) \to \bR \,\, , \quad f_{x_{\zero}}(x)\= \tfrac12\td(x,x_{\zero})^2 \label{f_x}\eeq is of class $\cC^2$ and regular, that is $(\n f_{x_{\zero}})(x)\neq0_x$ for any $x \in B(x_{\zero},\inj(x_{\zero}))$. This follows from the Gauss Lemma, which implies that $\td(x,x_{\zero})=\big|(\Exp_{x_{\zero}})^{-1}(x)\big|$ for any $x \in B(x_{\zero},\inj(x_{\zero}))$.

We recall that the injectivity radius $x \mapsto \inj(x)$ is continuous whenever $g$ is complete (see e.g. \cite{Sakai}). Moreover, for any $0<r<\inj(x_{\zero})$, the ball $B(x_{\zero},r)$ has compact closure, the sphere $S(x_{\zero},r)=(f_{x_{\zero}})^{-1}(\frac{r^2}2)$ is an embedded $(n{-}1)$-submanifold of class $\cC^2$ and, for any $x \in S(x_{\zero},r)$, the gradient $(\n f_{x_{\zero}})(x)$ is outward-pointing perpendicular to the tangent space $T_xS(x_{\zero},r)$. For the sake of notation, given $x,y \in M$ such that $\td(x,y) < \min\{\inj(x),\inj(y)\}$, we denote by $L_{xy}:T_xM \to T_yM$ the parallel transport along the unique minimizing geodesic from $x$ to $y$. \smallskip

Let now $x_{\zero} \in M$ and $0 < r < \inj(x_{\zero})$. We recall that the ball $B(x_{\zero},r)$ is said to be {\it strongly geodesically convex} if, for any ball $B(y,r') \subset B(x_{\zero},r)$, any two points in $B(y,r')$ are joined by a unique minimizing geodesic which is entirely contained in $B(y,r')$. In this case, it holds that $(D^2f_{x_{\zero}})(x)>0$ for any $x \in B(x_{\zero},r)$. The value \beq R(x_{\zero}) \= \sup\big\{0<r<\inj(x_{\zero}) : B(x_{\zero},r) \text{ is strongly geodesically convex } \big\} \label{R(x)} \eeq is called {\it convexity radius of $(M,g)$ at $x_{\zero}$.} We recall that the function $x \mapsto R(x)$ is positive and, from the very definition, it is also $1$-Lipschitz (see e.g. \cite{Xu18}). More precisely: \begin{itemize}
\item[$\bcdot$] if there exists $\tilde{x} \in M$ such that $R(\tilde{x})=+\infty$, then $R(x)=+\infty$ for any $x \in M$;
\item[$\bcdot$] if there exists $\tilde{x} \in M$ such that $R(\tilde{x})<+\infty$, then $R(x) < \infty$ for any $x \in M$ and $$|R(x)-R(y)|\leq \td(x,y) \quad \text{ for any $x,y \in M$ } \, .$$
\end{itemize}

Notice that in the Euclidean case, i.e. when $(M,g)=(\bR^n,g_{\rm flat})$, for any $x_{\zero} \in \bR^n$ it holds that $$R(x_{\zero})=\inj(x_{\zero})=+\infty \,\, , \quad f_{x_{\zero}}(x)=\tfrac12|x-x_{\zero}|^2 \,\, , \quad (\n f_{x_{\zero}})(x)=x-x_{\zero} \,\, , \quad (D^2f_{x_{\zero}})(x)=\Id_{\bR^n}$$ and any parallel transport $L_{xy}: \bR^n \simeq T_x\bR^n \to T_y\bR^n \simeq \bR^n$ coincides with the identity map.

\medskip

\section{Fully nonlinear PDEs on Riemannian manifolds}\label{sec;fully}

\subsection{Notation} \label{sec;notation} \hfill \par

Let $(M,g)$  be a Riemannian manifold and $\W \subset M$ a domain. We denote by $J^2M \to M$ the 2-jet bundle on $M$, which splits as $$J^2M = (M \times \bR) \oplus_{\mathsmaller{M}} T^*M \oplus_{\mathsmaller{M}} \Sym^2(T^*M)$$ by means of the Riemannian metric $g$. For any function $\f \in \cC^2(\W,\bR)$, its 2-jet at $x$ is given by $$\big(j^2\f\big)(x) \= \big(\f(x),D\f(x),D^2\f(x)\big) \in J^2_xM \,\, , \quad J^2_xM= \bR \oplus T_x^*M \oplus \Sym^2(T^*_xM) \,\, .$$ We define the subbundle $\wt{J}^2M \subset J^2M$ by setting $$\wt{J}^2M \= \bigsqcup_{x \in M}\wt{J}^2_xM \,\, , \quad \wt{J}^2_xM \= \big\{(s,q,Q) \in J^2_x(M) : q \neq 0\big\} \,\, .$$ Let us consider a function $F: \wt{J}^2(M) \to \bR$, which is assumed to satisfy the following fundamental condition: \begin{enumerate}[label=(p.), leftmargin=30pt]
\item\label{(p.)} $F$ is {\it proper}, i.e. $F(x,r,q,P)\leq F(x,s,q,Q)$ whenever $r \leq s$ and $Q \leq P$.
\end{enumerate} For the sake of shortness, for any function $\f \in \cC^2(\W, \bR)$ we define $$F[\f](x) \= F\big(x,\big(j^2\f\big)(x)\big) = F(x,\f(x),D\f(x),D^2\f(x)) \quad \text{ for any $x \in \W$ such that $D\f(x) \neq 0$ }\, . $$ Moreover, we set $$\begin{aligned}
\USC(\W) &\= \{ \text{ upper semicontinuous functions $u: \W \to [-\infty,+\infty)$ } \} \,\, , \\
\LSC(\W) &\= \{ \text{ lower semicontinuous functions $v: \W \to (-\infty,+\infty]$ } \}
\end{aligned}$$ and we define $\USC(\ol{\W})$, $\LSC(\ol{\W})$ similarly. Then, we recall the definitions of viscosity sub- and supersolution of the equation \beq F[u](x)=F(x,u(x),Du(x),D^2u(x))=0 \quad \text{ on }\, \W \,\, .\label{PDE}\eeq 

\begin{definition} A function $u \in \USC(\W)$ (resp. $v \in \LSC(\W)$) is a {\it viscosity subsolution (resp. viscosity supersolution) to \eqref{PDE}} if, for any function $\f \in \cC^2(\W,\bR)$ and $x \in \W$ local maximum of $u-\f$ (resp. local minimum of $v-\f$) with $D\f(x)\neq0$, it holds $F(x,u(x),D\f(x), D^2\f(x)) \leq 0$ (resp. $F(x,v(x),D\f(x), D^2\f(x)) \geq 0$). A continuous function $u: \W \to \bR$ is a {\it viscosity solution to \eqref{PDE}} if it is both a viscosity subsolution and a viscosity supersolution. \end{definition}

\begin{rem} Notice that, since we allow $F$ to be singular at $q=0$, this definition is slightly weaker than the one in \cite[Sect 2]{CIL}. Other refined notions of viscosity inequalities via the envelopes has appeared when dealing with singular equations (we refer e.g. to \cite[Def 2.3.1]{Giga}, \cite{Kim} and references therein). However, we stress that if $F$ is continuous and $u: \W \to \bR$ is of class $\cC^2$ with $Du\neq0$, then $u$ is a viscosity subsolution (resp. viscosity supersolution) to $F[u]=0$ if and only if it is a classical subsolution (resp. classical supersolution) to $F[u]=0$. \end{rem}

We introduce now two notions of ellipticity which will be of central importance in this work (see also \cite[Sec 1]{BDL1}). These are: \begin{enumerate}[label=(l.p.e.), leftmargin=40pt]
\item\label{(l.p.e.)} $F$ is {\it lower partially elliptic} if for any $x_{\zero} \in M$ there exists a function $\a_{x_{\zero}} : B(x_{\zero},R(x_{\zero})) \to [0,+\infty)$ such that $$F\big(x,0,(Df_{x_{\zero}})(x),(D^2f_{x_{\zero}})(x){-}\a\,(Df_{x_{\zero}})(x){\otimes}(Df_{x_{\zero}})(x)\big)>0$$ for any $x \in B(x_{\zero},R(x_{\zero}))$ and $\a>\a_{x_{\zero}}(x)$; 
\end{enumerate} \begin{enumerate}[label=(u.p.e.), leftmargin=40pt]
\item\label{(u.p.e.)} $F$ is {\it upper partially elliptic} if for any $x_{\zero} \in M$ there exists a function $\a_{x_{\zero}} : B(x_{\zero},R(x_{\zero})) \to [0,+\infty)$ such that $$F\big(x,0,(Df_{x_{\zero}})(x),\a\,(Df_{x_{\zero}})(x){\otimes}(Df_{x_{\zero}})(x){-}(D^2f_{x_{\zero}})(x)\big)<0$$ for any $x \in B(x_{\zero},R(x_{\zero}))$ and $\a>\a_{x_{\zero}}(x)$; 
\end{enumerate} Here, notice that $R(x_{\zero})$ and $f_{x_{\zero}}$ have been defined in \eqref{R(x)} and \eqref{f_x}, respectively. For the sake of shortness, we say that $F$ is {\it partially elliptic} if it is both lower and upper partially elliptic. Furthermore, we recall that $F$ is {\it uniformly elliptic} if there exist two constants $0<\l \leq \L$ such that $$\l\Tr(P) \leq F(x,s,q,Q)-F(x,s,q,Q+P) \leq \L \Tr(P)$$ for any $x \in M$, $(s,q,Q) \in \wt{J}^2_xM$, $P \in \Sym^2(T_xM)$ with $P \geq 0$. Notice that uniform ellipticity implies partial ellipticity (see Lemma \ref{uepe}), but the converse assertions do not hold true. \smallskip

We also introduce two scaling properties which will play a role (see again \cite[Sec 1]{BDL1}). These are: \begin{enumerate}[label=(l.s.p.), leftmargin=40pt]
\item\label{(l.s.p.)} $F$ has the {\it lower scaling property} if for any $x_{\zero} \in M$ there exist two functions $\h_{x_{\zero}}: (0,1] \to (0,+\infty)$ and $\a_{x_{\zero}} : B(x_{\zero},R(x_{\zero})) \to [0,+\infty)$ such that $$\begin{aligned}
F\big(x,cs,c(Df_{x_{\zero}})(x),&c\big((D^2f_{x_{\zero}})(x){-}\a\,(Df_{x_{\zero}})(x){\otimes}(Df_{x_{\zero}})(x)\big)\big) \geq \\
&\h_{x_{\zero}}(c)F\big(x,s,(Df_{x_{\zero}})(x),(D^2f_{x_{\zero}})(x){-}\a\,(Df_{x_{\zero}})(x){\otimes}(Df_{x_{\zero}})(x)\big)
\end{aligned}$$ for any $c \in (0,1]$, $s \in [-1,0]$, $x \in B(x_{\zero},R(x_{\zero}))$ and $\a>\a_{x_{\zero}}(x)$;
\end{enumerate} \begin{enumerate}[label=(u.s.p.), leftmargin=40pt]
\item\label{(u.s.p.)} $F$ has the {\it upper scaling property} if for any $x_{\zero} \in M$ there exist two functions $\h_{x_{\zero}}: (0,1] \to (0,+\infty)$ and $\a_{x_{\zero}} : B(x_{\zero},R(x_{\zero})) \to [0,+\infty)$ such that $$\begin{aligned}
F\big(x,cs,c(Df_{x_{\zero}})(x),&c\big(\a\,(Df_{x_{\zero}})(x){\otimes}(Df_{x_{\zero}})(x){-}(D^2f_{x_{\zero}})(x)\big)\big) \leq \\
&\h_{x_{\zero}}(c)F\big(x,s,(Df_{x_{\zero}})(x),\a\,(Df_{x_{\zero}})(x){\otimes}(Df_{x_{\zero}})(x){-}(D^2f_{x_{\zero}})(x)\big)
\end{aligned}$$ for any $c \in (0,1]$, $s \in [0,1]$, $x \in B(x_{\zero},R(x_{\zero}))$ and $\a>\a_{x_{\zero}}(x)$.
\end{enumerate} Finally, we recall that $F$ is said to be {\it positively $h$-homogeneous}, with $h \in \bR$, if $$F(x,cs,cq,cQ)= c^h F(x,s,q,Q)$$ for any $c>0$, $x \in M$, $(s,q,Q) \in \wt{J}^2_xM$. Clearly, positive $h$-homogeneity implies both the lower and the upper scaling properties, but the converse assertions do not hold true.

\subsection{Universal Riemannian operators} \label{URiem} \hfill \par

We recall here the construction of a distinguished kind of PDE's on Riemannian manifolds. More specifically, these have constant coefficients and are obtained from some Euclidean operators via the action of the orthogonal group $\fO(n)$ (see \cite[Sect 5]{HL11}). \smallskip

Let $(M,g)$ be a Riemannian manifold. Consider a function $$\ul{\eF} : \bR {\times} \big(\bR^n {\setminus} \{0\}\big) {\times} \Sym(n) \to \bR$$ and assume that it is invariant under the standard left action of the orthogonal group $\fO(n)$, i.e. $$\ul{\eF}(s,v,A) = \ul{\eF}(s,a.v,a.A.a^{\mathsmaller{T}}) \quad \text{ for any } a \in \fO(n) \,\, .$$ Then, one can construct an associated operator $F: \wt{J}^2M \to \bR$ in the following way. Firstly, for any $x \in M$ and for any orthonormal frame $e=(e_1,{\dots},e_n)$ for $T_xM$, we consider the linear isomorphism $$\Phi_{(x,e)}: J^2_xM \to \bR \oplus \bR^n \oplus \Sym(n) \,\, , \quad \Phi_{(x,e)}(s,q,Q) \= (s,\Phi_{(x,e)}^1(q),\Phi^2_{(x,e)}(Q))$$ defined by $$\Phi_{(x,e)}^1(q) \= \big(q(e_1),{\dots},q(e_n)\big)^{\mathsmaller{T}} \,\, , \quad \Phi_{(x,e)}^2(Q) \= \big(Q(e_j,e_{\ell})\d^{\ell i}\big)^{1 \leq i \leq n}_{1\leq j \leq n} \,\, .$$ Then, we define $$\eF: \wt{J}^2M \to \bR \,\, , \quad \eF(x,s,q,Q) \= \ul{\eF}\big((\Phi_{(x,e)})(s,q,Q)\big) \,\, ,$$ where $e=(e_1,{\dots},e_n)$ is any orthonormal frame for the tangent space $T_xM$. Then, by means of the $\fO(n)$-invariance of $\ul{\eF}$, one can easily prove that $\eF$ is well defined. The function $\eF$ is called {\it universal Riemannian operator associated to $\ul{\eF}$}. It is straightforward to check that the following statements hold true: \begin{itemize}
\item[i)] $\eF$ is proper / continuous / uniformly elliptic / positively $h$-homogeneous if and only if $\ul{\eF}$ is too;
\item[ii)] $\eF$ is lower partially elliptic if $$\sup_{\mathsmaller{\a>0}}\,\ul{\eF}(0,v,A-\a\, v{\otimes}v^{\mathsmaller{T}})>0 \quad \text{ for any $v \in \bR^n {\setminus} \{0\}$,  $A \in \Sym(n)$ } ;$$
\item[ii')] $\eF$ is upper partially elliptic if $$\inf_{\mathsmaller{\a>0}}\,\ul{\eF}(0,v,\a\, v{\otimes}v^{\mathsmaller{T}}-A)<0 \quad \text{ for any $v \in \bR^n {\setminus} \{0\}$,  $A \in \Sym(n)$ } ;$$
\item[iii)] $\eF$ has the lower scaling property if there exists a function $\h: (0,1] \to (0,+\infty)$ such that $$\ul{\eF}(cs,cv,cA)\geq \h(c)\, \ul{\eF}(s,v,A) \quad \text{ for any $c \in (0,1]$, $s \in [-1,0]$, $v \in \bR^n \setminus \{0\}$, $A \in \Sym(n)$ } ;$$
\item[iii')] $\eF$ has the upper scaling property if there exists a function $\h: (0,1] \to (0,+\infty)$ such that $$\ul{\eF}(cs,cv,cA)\leq \h(c)\, \ul{\eF}(s,v,A) \quad \text{ for any $c \in (0,1]$, $s \in [0,1]$, $v \in \bR^n \setminus \{0\}$, $A \in \Sym(n)$ } ;$$
\end{itemize} \smallskip

The most famous example of universal Riemannian operator is the Laplace-Beltrami operator, which is associated to the Euclidean operator $$\ul{\eF}_{\rm \mathsmaller{LB}}: \Sym(n) \to \bR \,\, , \quad \ul{\eF}_{\rm \mathsmaller{LB}}(A) \= -\Tr(A) \,\, .$$ It is proper, positively $1$-homogeneous, Lipschitz continuous and uniformly elliptic with ellipticity constants $\l=\L=1$.

\begin{rem} Another well known example is the Monge-Amp{\eac}re operator, which is associated to $$\ul{\eF}_{\rm \mathsmaller{MA}}: \Sym(n) \to \bR \,\, , \quad \ul{\eF}_{\rm \mathsmaller{MA}}(A)=\det(A) \,\, .$$ However, being the determinant not partially elliptic, it does not fall into the treatment of our analysis. \end{rem}

We are going to list below some other examples of universal Riemannian operators. From now on, for any $A \in \Sym(n)$, we will denote by $\mu_1(A) \leq {\dots} \leq \mu_n(A)$ its ordered eigenvalues.

\subsubsection*{Extremal Pucci operators} \hfill \par

Fix two constants $0<\l\leq\L$ and consider the set $\cB_{\l,\L} \= \big\{B \in \Sym(n): \l I \leq B \leq \L I\big\}$. Then, we define the functions $$\begin{aligned}
\ul{\eM}^+_{\l,\L}: \Sym(n) \to \bR \,\, &, \quad \ul{\eM}^+_{\l,\L}(A) \= \! \sup_{B \in \cB_{\l,\L}} \! \big({-}\Tr(B.A)\big)= -\l \! \sum_{\mathsmaller{\mu_i(A)>0}}\mu_i(A)-\L \! \sum_{\mathsmaller{\mu_i(A)<0}}\mu_i(A) \,\, , \\
\ul{\eM}^-_{\l,\L}: \Sym(n) \to \bR \,\, &, \quad \ul{\eM}^-_{\l,\L}(A) \= \! \inf_{B \in \cB_{\l,\L}} \! \big({-}\Tr(B.A)\big)= -\L \! \sum_{\mathsmaller{\mu_i(A)>0}}\mu_i(A)-\l \! \sum_{\mathsmaller{\mu_i(A)<0}}\mu_i(A) \,\, .
\end{aligned}$$ The associated universal Riemannian operators $\eM^{\pm}_{\l,\L}$ are the so called {\it extremal Pucci's operators}, which are the prototype Hamilton-Jacobi-Bellman operators and, perhaps, the simplest example of Isaacs operators. They are proper, positively $1$-homogeneous, Lipschitz continuous and uniformly elliptic with ellipticity constants $0<\l\leq\L$. Moreover, it is well known that the uniform ellipticity can be characterized by means of the extremal operators $\eM^\pm_{\l,\L}$ via \beq \eM^-_{\l,\L}(x,Q) \leq F(x,s,q,Q)-F(x,s,q,0) \leq \eM^+_{\l,\L}(x,Q) \,\, . \label{extPucci} \eeq (see e.g. \cite[Lemma 2.2]{CC}). Furthermore, by \cite[Lemma 2.10]{CC} \beq \ul{\eM}^-_{\l,\L}(A- \a\, v{\otimes}v^{\mathsmaller{T}}) \geq \ul{\eM}^-_{\l,\L}(A)+\a\l |v|^2 \,\, , \label{pePucci} \eeq we get the following

\begin{lemma} Let $F: \wt{J}^2M \to \bR$ be a proper operator. If $F$ is uniformly elliptic, then it is partially elliptic. \label{uepe} \end{lemma}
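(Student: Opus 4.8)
The plan is to show that uniform ellipticity gives us, for free, the specific positivity estimate required by the definition of partial ellipticity, and that the $\fO(n)$-invariant machinery behind universal operators is not even needed here since we argue intrinsically. Recall that we must produce, for each $x_{\zero}\in M$, a function $\a_{x_{\zero}}:B(x_{\zero},R(x_{\zero}))\to[0,+\infty)$ such that, writing $q\=(Df_{x_{\zero}})(x)$ and $Q\=(D^2f_{x_{\zero}})(x)$, one has $F(x,0,q,Q-\a\,q{\otimes}q)>0$ for all $\a>\a_{x_{\zero}}(x)$, and symmetrically $F(x,0,q,\a\,q{\otimes}q-Q)<0$ for the upper version. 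Fix $x_{\zero}$ and $x\in B(x_{\zero},R(x_{\zero}))$, and abbreviate $q=(Df_{x_{\zero}})(x)\in T_x^*M$, which is nonzero since $f_{x_{\zero}}$ is regular on $B(x_{\zero},\inj(x_{\zero}))$, so $(0,q,Q)\in\wt J^2_xM$ and $F$ is defined there.

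The key step is to apply the uniform ellipticity inequality with $P=\a\,q{\otimes}q\geq0$ (for $\a\geq0$): using the left-hand bound,
\[
F(x,0,q,Q)-F(x,0,q,Q-\a\,q{\otimes}q)=F\big(x,0,q,(Q-\a\,q{\otimes}q)+\a\,q{\otimes}q\big)-F(x,0,q,Q-\a\,q{\otimes}q)\leq \L\,\a\,\Tr(q{\otimes}q),
\]
while the lower bound $\l\Tr(P)\leq F(x,s,q,Q')-F(x,s,q,Q'+P)$ applied with $Q'=Q-\a\,q{\otimes}q$ and $P=\a\,q{\otimes}q$ gives
\[
F(x,0,q,Q-\a\,q{\otimes}q)=F(x,0,q,Q)+\big[F(x,0,q,Q-\a\,q{\otimes}q)-F(x,0,q,Q)\big]\geq F(x,0,q,Q)+\l\,\a\,\Tr(q{\otimes}q).
\]
Now $\Tr(q{\otimes}q)=|q|^2>0$ where $|\cdot|$ is the norm on $T_x^*M$ dual to $g_x$ (identifying $q$ with the vector $\n f_{x_{\zero}}(x)$, this is $|\n f_{x_{\zero}}(x)|^2\neq 0$). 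Hence $F(x,0,q,Q-\a\,q{\otimes}q)\geq F(x,0,q,Q)+\l\,\a\,|q|^2$, and the right side is strictly positive as soon as $\a>-F(x,0,q,Q)/(\l|q|^2)$. So we may simply set
\[
\a_{x_{\zero}}(x)\=\max\Big\{0,\ -\tfrac{F\big(x,0,(Df_{x_{\zero}})(x),(D^2f_{x_{\zero}})(x)\big)}{\l\,|(Df_{x_{\zero}})(x)|^2}\Big\},
\]
which is a well-defined nonnegative real number for each $x\in B(x_{\zero},R(x_{\zero}))$, and \ref{(l.p.e.)} follows. The upper version \ref{(u.p.e.)} is obtained identically: with $P=\a\,q{\otimes}q\geq0$, the right-hand uniform-ellipticity bound applied to $F(x,0,q,\a\,q{\otimes}q-Q)$ versus $F(x,0,q,-Q)$ yields $F(x,0,q,\a\,q{\otimes}q-Q)\leq F(x,0,q,-Q)+\L\,\a\,|q|^2$, which is negative once — wait, that goes the wrong way; instead one uses the left-hand bound $\l\Tr(P)\leq F(x,0,q,\a\,q{\otimes}q-Q)^{\!-1}\!\dots$, more precisely $\l\,\a\,|q|^2\leq F(x,0,q,-Q)-F(x,0,q,\a\,q{\otimes}q-Q)$, so $F(x,0,q,\a\,q{\otimes}q-Q)\leq F(x,0,q,-Q)-\l\,\a\,|q|^2<0$ for $\a$ large, giving the analogous threshold $\a_{x_{\zero}}(x)$.

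Since both \ref{(l.p.e.)} and \ref{(u.p.e.)} hold, $F$ is partially elliptic, and the proof is complete. I do not anticipate a genuine obstacle: the only points requiring a word of care are (i) checking that $(0,q,Q)$ lies in $\wt J^2_xM$, i.e. that $q=(Df_{x_{\zero}})(x)\neq0$, which is exactly the regularity of $f_{x_{\zero}}$ recorded in Section~\ref{sec;prel}, and (ii) correctly orienting the two uniform-ellipticity inequalities when passing between $Q-\a\,q{\otimes}q$ and $Q$ (resp. $\a\,q{\otimes}q-Q$ and $-Q$) — a routine but easy-to-slip sign bookkeeping. One might alternatively phrase the argument through the Pucci characterization \eqref{extPucci} together with \eqref{pePucci}, since $\ul{\eM}^-_{\l,\L}(A-\a\,v{\otimes}v^{\mathsmaller{T}})\geq\ul{\eM}^-_{\l,\L}(A)+\a\l|v|^2$ is precisely the inequality above; this is the route the surrounding text seems to set up, and it is equivalent.
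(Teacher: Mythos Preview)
Your argument is correct and is essentially the paper's proof: the core inequality $F(x,0,q,Q-\a\,q{\otimes}q)\geq F(x,0,q,Q)+\l\a|q|^2$ is exactly what the paper obtains by routing through \eqref{extPucci} and \eqref{pePucci}, a detour you yourself note is equivalent. The only cosmetic difference is that for \ref{(u.p.e.)} the paper passes to the dual operator $F^-(x,s,q,Q)\=-F(x,-s,-q,-Q)$ and reuses the lower computation, whereas you redo the estimate directly; both are the same one-line application of uniform ellipticity.
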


\begin{proof} Assume that $F$ is uniformly elliptic with ellipticity constants $0<\l \leq \L$ and fix $x_{\zero} \in M$. Then, for any $x \in B(x_{\zero},R(x_{\zero}))$, since $(\n f_{x_{\zero}})(x)\neq 0_x$, by \eqref{extPucci} and \eqref{pePucci} we get $$\begin{aligned}
F\big(x,0,(Df_{x_{\zero}})&(x),(D^2f_{x_{\zero}})(x){-}\a\,(Df_{x_{\zero}})(x){\otimes}(Df_{x_{\zero}})(x)\big) \\
&\geq F\big(x,0,(Df_{x_{\zero}})(x),0\big) + \eM^-_{\l,\L}\big(x,(D^2f_{x_{\zero}})(x){-}\a\,(Df_{x_{\zero}})(x){\otimes}(Df_{x_{\zero}})(x)\big) \\
&\geq F\big(x,0,(Df_{x_{\zero}})(x),0\big) + \eM^-_{\l,\L}\big(x,(D^2f_{x_{\zero}})(x)\big)+\a\l \big|(\n f_{x_{\zero}})(x)\big|^2
\end{aligned}$$ and therefore $F$ is lower partially elliptic by taking $\a$ large enough. On the other hand, the operator $$F^-: \wt{J}^2M \to \bR \,\, , \quad F^-(x,s,q,Q) \= -F(x,-s,-q,-Q)$$ is uniformly elliptic as well. Hence, by means of our previous computation, we obtain that $F^-$ is lower partially elliptic, which is equivalent to say that $F$ is upper partially elliptic. \end{proof}

\subsubsection*{Pucci's operators from \cite{Pucci66}} \hfill \par

Fix a constant $0<\a\leq\tfrac1{n}$ and consider the set $\tilde{\cB}_{\a} \= \big\{B \in \Sym(n): B \geq \a I_n \, , \,\, \Tr(B)=1 \big\}$. Then, we define the functions $$\begin{aligned}
\ul{\eP}^+_{\a}: \Sym(n) \to \bR \,\, &, \quad \ul{\eP}^+_{\a}(A) \= \! \sup_{B \in \tilde{\cB}_{\a}} \! \big({-}\Tr(B.A)\big) = -\a\Tr(A)-(1-n\a)\mu_1(A) \,\, , \\
\ul{\eP}^-_{\a}: \Sym(n) \to \bR \,\, &, \quad \ul{\eP}^-_{\a}(A) \= \! \inf_{B \in \tilde{\cB}_{\a}} \! \big({-}\Tr(B.A)\big) = -\a\Tr(A)-(1-n\a)\mu_n(A) \,\, .
\end{aligned}$$ We call the associated universal Riemannian operators $\eP^{\pm}_{\a}$ {\it original Pucci's operators}, as they were firstly introduced in the Euclidean case by Pucci in \cite{Pucci66}. They are proper, positively $1$-homogeneous, Lipschitz continuous and uniformly elliptic with ellipticity constants $0<\a \leq 1-(n-1)\a$, but do not allow to characterize uniform ellipticity of fully nonlinear operators as their extremal counterpart $\mathcal{M}^\pm_{\l,\L}$. Then, by means of \eqref{extPucci}, they are related with $\eM^{\pm}_{\l,\L}$ via $$\eM^-_{\a,1-(n-1)\a}(x,Q) \leq \eP^-_{\a}(x,Q) \leq \eP^+_{\a}(x,Q) \leq \eM^+_{\a,1-(n-1)\a}(x,Q) \,\, .$$

\subsubsection*{$p$-Laplacian operator} \hfill \par

Let $1<p<\infty$ and consider the function $$\ul{\eF}_p: \big(\bR^n {\setminus} \{0\}\big) {\times} \Sym(n) \to \bR \,\, , \quad \ul{\eF}_p(v,A) \= -|v|^{p-2} \big(\Tr(A)+(p-2)|v|^{-2}v^{\mathsmaller{T}}.A.v\big) \,\, .$$ The associated universal Riemannian operator $\eF_p$ is the so called {\it $p$-Laplacian}, which normally appears in divergence form as $$\eF_p[u]=-\div\!\big(|\n u|^{p-2}\n u\big) \,\, .$$ It is proper, continuous and positively $(p{-}1)$-homogeneous. Moreover, for any $v \neq 0$ we have $$\begin{aligned}
&\ul{\eF}_p(v,cA) =c\,\ul{\eF}_p(v,A) \quad \text{ for any $c \in \bR$ } , \\
&\ul{\eF}_p(v,A-\a\, v{\otimes}v^{\mathsmaller{T}}) =\ul{\eF}_p(v,A)+\a(p-1)|v|^p \\
\end{aligned}$$ and so $\eF_p$ is partially elliptic, even though it is not uniformly elliptic.

We conclude by saying that the 1-homogeneous version of the $p$-Laplacian $\eF_p$, called {\it game-theoretic $p$-Laplacian}, that is the operator $\eF^{\mathsmaller{G}}_p$ associated to $$\ul{\eF}^{\mathsmaller{G}}_p(v,A) \= |v|^{2-p}\,\ul{\eF}_p(v,A) = -\Tr(A)-(p-2)|v|^{-2}v^{\mathsmaller{T}}.A.v \,\, .$$ is proper, continuous and positively $1$-homogeneous. Moreover, it is easy to check that it is partially elliptic via condition (ii) in Section \ref{sec;notation}.

\subsubsection*{$\infty$-Laplacian operator} \hfill \par

Consider the function $$\ul{\eF}_{\infty}: \bR^n {\oplus} \Sym(n) \to \bR \,\, , \quad \ul{\eF}_{\infty}(v,A) \= - v^{\mathsmaller{T}}.A.v \,\, .$$ The associated universal Riemannian operators $\eF_{\infty}$ is the so called {\it $\infty$-Laplacian}, which is proper, continuous and positively $3$-homogeneous. Since $$\begin{aligned}
&\ul{\eF}_{\infty}(v,cA) =c\,\ul{\eF}_{\infty}(v,A) \quad \text{ for any $c \in \bR$ } , \\
&\ul{\eF}_{\infty}(v,A - \a\, v{\otimes}v^{\mathsmaller{T}}) = \ul{\eF}_{\infty}(v,A)+\a|v|^4 \,\, , \end{aligned}$$ we conclude that $\eF_{\infty}$ is partially elliptic. Similarly, the positively $h$-homogeneous version $\eF^h_{\infty}$ of the $\infty$-Laplacian, which is the operator associated to $$\ul{\eF}^h_{\infty}(v,A) \= |v|^{h-3} \ul{\eF}_{\infty}(v,A) = - |v|^{h-3} v^{\mathsmaller{T}}.A.v \,\, ,$$ is proper, continuous and it is easy to check that it is partially elliptic.

\subsubsection*{Mean curvature operator} \hfill \par

Consider the function $$\ul{\eF}_{\rm{mc}}: \bR^n {\oplus} \Sym(n) \to \bR \,\, , \quad \ul{\eF}_{\rm{mc}}(v,A) \= (1+|v|^2)^{-\frac32}\big(-(1+|v|^2)\Tr(A)+v^{\mathsmaller{T}}.A.v\big) \,\, .$$ The associated universal Riemannian operator $\eF_{\rm{mc}}$ is the so called {\it mean curvature operator}, which can be written in divergence form as $$\eF_{\rm{mc}}[u]=-\div\!\bigg(\frac{\n u}{\sqrt{1+|\n u|^2}}\bigg) \,\, .$$ It is proper and continuous. Moreover, since $$\begin{aligned}
&\ul{\eF}_{\rm{mc}}(v,cA) =c\,\ul{\eF}_{\rm{mc}}(v,A) \quad \text{ for any $c \in \bR$ } , \\
&\ul{\eF}_{\rm{mc}}(v,A - \a\, v{\otimes}v^{\mathsmaller{T}}) = \ul{\eF}_{\rm{mc}}(v,A)+\a|v|^2(1+|v|^2)^{-\frac32} \,\, ,
\end{aligned}$$ it follows that $\eF_{\rm{mc}}$ is partially elliptic.
\medskip

\section{The Strong Maximum Principle} \label{sec;SMP}

Let $(M,g)$ be a Riemannian manifold. Firstly, we are going to prove a technical result which asserts the propagation of maxima on small balls for subsolutions to lower partially elliptic operators on $(M,g)$. Namely

\begin{prop} Let $F: \wt{J}^2M \to \bR$ be a proper, lower semicontinuous operator which is lower partially elliptic \ref{(l.p.e.)} and has the lower scaling property \ref{(l.s.p.)}. Let $\W \subset M$ be a domain and $u \in \USC(\W)$ a viscosity subsolution to $F[u]=0$ on $\W$ which attains a nonnegative maximum at some point $x_{\zero} \in \W$. Then, for any radius $0<r<\tfrac12\min\big\{R(x_{\zero}),\td(x_{\zero},\p\W)\big\}$ and for any point $y \in S(x_{\zero},r)$, the ball $B(y,r)$ contains a point $\tilde{x}=\tilde{x}(y,r)$ with $u(\tilde{x})=u(x_{\zero})$. \label{propmax} \end{prop}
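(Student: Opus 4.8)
The plan is to run a Hopf-type barrier argument centred at $y$, by contradiction. Put $M\=u(x_{\zero})\geq0$ and suppose $u(\tilde x)\neq M$ for every $\tilde x\in B(y,r)$; since $x_{\zero}$ is a maximum point of $u$ over $\W$, this forces $u<M$ on $B(y,r)$. First I would record the geometry needed to use the squared distance to $y$ as a barrier: as $R$ is $1$-Lipschitz and $\td(x_{\zero},y)=r<\tfrac12R(x_{\zero})$, one gets $R(y)\geq R(x_{\zero})-r>r$, whence $\overline{B(y,r)}\subset B(y,R(y))\subset B(y,\inj(y))$; thus on $\overline{B(y,r)}$ the function $f_y\=\tfrac12\td(\cdot,y)^2$ is $\cC^2$, regular, and extends to a $\cC^2$ function on a neighbourhood of $x_{\zero}$, while $\td(y,\p\W)\geq\td(x_{\zero},\p\W)-r>r$ gives $\overline{B(y,r)}\subset\W$.

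Next comes the barrier: for $\beta>0$ set $v\=e^{-\beta f_y}-e^{-\beta r^2/2}$, so that $v>0$ on $B(y,r)$, $v=0$ on $S(y,r)$, $v(x_{\zero})=0$, $v<0$ just outside $\overline{B(y,r)}$, and
\[
Dv=-\beta e^{-\beta f_y}\,Df_y,\qquad D^2v=\beta e^{-\beta f_y}\big(\beta\,Df_y\otimes Df_y-D^2f_y\big).
\]
Fix $\rho\in(0,r)$, let $A\=\{x\in M:\rho<\td(x,y)<r\}$, set $\gamma\=M-\max_{S(y,\rho)}u>0$ (positive by the contradiction hypothesis and compactness), and consider the $\cC^2$ test function $\psi\=M-\delta v$. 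The three parameters are fixed in the order $\beta$, $\rho$, $\delta$: first $\beta\geq1$ large enough to exceed, on the compact annulus $\overline A$, the functions $\alpha_y$ appearing in \ref{(l.p.e.)} and \ref{(l.s.p.)} (which may be taken bounded there); then, since $F$ is lower semicontinuous and \ref{(l.p.e.)} makes $F$ strictly positive on the compact set $\{(x,0,(Df_y)(x),(D^2f_y)(x)-\beta(Df_y)(x)\otimes(Df_y)(x)):x\in\overline A\}$, a compactness argument furnishes $\delta_0>0$ such that $F$ remains positive there when the second slot ranges over $(-\delta_0,0]$, and one shrinks $\rho$ towards $r$ so that $\tfrac12(r^2-\rho^2)<\delta_0$; finally $\delta>0$ is chosen with $\delta\beta\leq1$ and $\delta\sup_{\overline A}v<\gamma$.

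Then comes the contact-point analysis. On $\p A$ one has $u\leq\psi$ (on $S(y,r)$ because $\psi=M\geq u$; on $S(y,\rho)$ because $\psi\geq M-\delta\sup_{\overline A}v>M-\gamma=\max_{S(y,\rho)}u$), and $(u-\psi)(x_{\zero})=0$. Let $\hat x\in\overline A$ be a maximum point of the upper semicontinuous function $u-\psi$, so $(u-\psi)(\hat x)\geq0$. If $\hat x\in\p A$, then $(u-\psi)(\hat x)=0$, hence $u\leq\psi$ on $\overline A$; combined with the fact that on the side $\td(\cdot,y)>r$ near $x_{\zero}$ we have $v<0$, hence $\psi>M\geq u$, this makes $x_{\zero}$ a local maximum of $u-\psi$ in $\W$, and we reset $\hat x\=x_{\zero}$. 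In every case $\hat x\neq y$, so, writing $c\=\delta\beta e^{-\beta f_y(\hat x)}\in(0,1]$, we have $D\psi(\hat x)=c\,(Df_y)(\hat x)\neq0$ and $D^2\psi(\hat x)=c\big((D^2f_y)(\hat x)-\beta\,(Df_y)(\hat x)\otimes(Df_y)(\hat x)\big)$, and the viscosity subsolution inequality at $\hat x$ reads
\[
F\big(\hat x,\,u(\hat x),\,c\,(Df_y)(\hat x),\,c\big((D^2f_y)(\hat x)-\beta\,(Df_y)(\hat x)\otimes(Df_y)(\hat x)\big)\big)\leq0.
\]
But $u(\hat x)\geq\psi(\hat x)=M-\delta v(\hat x)\geq-\delta v(\hat x)$, so by properness \ref{(p.)} the $u$-slot may be lowered to $s\=\min\{u(\hat x)/c,\,0\}$, which lies in $[-1,0]$ and, when negative, satisfies $|s|\leq v(\hat x)/(\beta e^{-\beta f_y(\hat x)})\leq\tfrac12(r^2-\rho^2)<\delta_0$; hence, applying \ref{(l.s.p.)} and then \ref{(l.p.e.)} (the latter in the $s$-neighbourhood form secured above),
\[
F\big(\hat x,\,u(\hat x),\,c\,(Df_y)(\hat x),\,c\big((D^2f_y)(\hat x)-\beta\,(Df_y)(\hat x)\otimes(Df_y)(\hat x)\big)\big)\ \geq\ \eta_y(c)\,F\big(\hat x,\,s,\,(Df_y)(\hat x),\,(D^2f_y)(\hat x)-\beta\,(Df_y)(\hat x)\otimes(Df_y)(\hat x)\big)\ >\ 0,
\]
which contradicts the previous display. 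Therefore $B(y,r)$ contains a point $\tilde x$ with $u(\tilde x)=M$.

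The hard part, I expect, is precisely this last reconciliation of the contact value $u(\hat x)$ with condition \ref{(l.p.e.)}, which is postulated only at $s=0$: one must combine properness, the lower semicontinuity of $F$, and a compactness argument on a sufficiently thin annulus in order to spread the strict inequality to a small interval of negative second arguments, and one must fix $\beta$, $\rho$, $\delta$ in exactly this order so that the scaling relation \ref{(l.s.p.)} remains applicable with $c=\delta\beta e^{-\beta f_y(\hat x)}\leq1$ at the (a priori unknown) point $\hat x$. Checking that $\alpha_y$ may indeed be taken bounded on $\overline A$, and that $\psi$ is a legitimate test function on a neighbourhood of $x_{\zero}$, are comparatively routine.
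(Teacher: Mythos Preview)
Your argument is correct and follows the same Hopf-barrier comparison as the paper's proof. The only notable difference is the choice of comparison region: the paper uses the small cap $U=B(x_{\zero},r')\cap B(y,r)$ rather than your annulus $A$, which allows it to invoke \ref{(l.p.e.)} and \ref{(l.s.p.)} only at the single point $x_{\zero}$ (where the barrier vanishes, so the $s$-slot is exactly zero) and then spread positivity by lower semicontinuity of $F\circ j^2h$, in place of your uniform compactness argument over $\ol A$.
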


\begin{proof} Assume by contradiction that there exist a (finite) radius $0<r_{\zero}<\tfrac12\min\big\{R(x_{\zero}),\td(x_{\zero},\p\W)\big\}$ and a point $y_{\zero} \in S(x_{\zero},r_{\zero})$ such that $u(x)<u(x_{\zero})$ for any $x \in B(y_{\zero},r_{\zero})$. Let $\a>0$ to be determined, $f_{y_{\zero}}(x) = \tfrac12\td(x,y_{\zero})^2$ as in \eqref{f_x} and consider the function $$h: B(y_{\zero},R(y_{\zero})) \to \bR \,\, , \quad h(x) \= -\exp\big({-}\a f_{y_{\zero}}(x)\big)+\exp\big({-}\a\tfrac{r_{\zero}^2}2\big) \,\, .$$ Notice that, since $x \mapsto R(x)$ is 1-Lipschitz, it follows that $r_{\zero}<R(y_{\zero})$. Indeed (see Section \ref{sec;prel}): \begin{itemize}
\item[$\bcdot$] if $R(x_{\zero})=+\infty$, then $R(y_{\zero})=+\infty$ as well,
\item[$\bcdot$] if $R(x_{\zero})<+\infty$, then $R(y_{\zero})<+\infty$ and $R(y_{\zero}) \geq R(x_{\zero}) - r_{\zero} > \tfrac12{R(x_{\zero})} > r_{\zero}$.\end{itemize} Moreover, from the very definition, it comes that $h(x)=0$ for any $x \in S(y_{\zero},r_{\zero})$ and $-1<h(x)<0$ for any $x \in B(y_{\zero},r_{\zero})$. Letting $c\=\a\exp\big({-}\a\tfrac{r_{\zero}^2}2\big)>0$, we get $$h(x_{\zero})=0 \,\, , \quad (Dh)(x_{\zero})=c\,(D f_{y_{\zero}})(x_{\zero}) \,\, , \quad (D^2h)(x_{\zero})=c\,\Big((D^2f_{y_{\zero}})(x_{\zero})-\a(D f_{y_{\zero}})(x_{\zero}){\otimes}(D f_{y_{\zero}})(x_{\zero})\Big) \,\, .$$ Hence, by the lower partial ellipticity and the lower scaling property, if we choose $\a>0$ big enough, we get $0<c<1$ and \begin{multline}\label{contr} F\big(x_{\zero},h(x_{\zero}),(Dh)(x_{\zero}),(D^2h)(x_{\zero})\big) \geq \\
\geq \h_{y_{\zero}}(c)\, F\big(x_{\zero},0,(D f_{y_{\zero}})(x_{\zero}),(D^2f_{y_{\zero}})(x_{\zero})-\a(D f_{y_{\zero}})(x_{\zero}){\otimes}(D f_{y_{\zero}})(x_{\zero})\big) >0 \,\, .
\end{multline} By the lower semicontinuity of $F$, there exist two constants $0<r<\min\{r_{\zero},R(y_{\zero})-r_{\zero}\big\}$ and $C>0$ such that $F[h](x)\geq C$ for any $x \in U \= B(x_{\zero},r) \cap B(y_{\zero},r_{\zero})$. By the lower scaling property, the function $\e h$ is a strict supersolution of $F[u]=0$ on $U$ for any $0 < \e < 1$. Assume also that $u(x)-u(x_{\zero})\leq 0$ for any $x \in B(x_{\zero},r)$ and recall that $u(x)-u(x_{\zero})<0$ for any $x \in B(y_{\zero},r_{\zero})$. Then, since $h(x)<0$ for any $x \in \p U \cap B(y_{\zero},r_{\zero})$ and $h(x)=0$ for any $x \in \p U \cap S(y_{\zero},r_{\zero})$, we can choose $0<\e_{\zero}<1$ in such a way that $u(x)-u(x_{\zero})\leq \e_{\zero} h(x)$ for any $x \in \p U$. Let us define then $$\psi: \ol{B}(x_{\zero},r) \to \bR \,\, , \quad \psi(x)\=u(x)-\big(u(x_{\zero})+\e_{\zero}h(x)\big) \,\, .$$ We claim that $\psi(x)\leq 0$ for any $x \in \ol{U} \subset \ol{B}(x_{\zero},r)$. In fact, assuming by contradiction that $\max_{x \in \ol{U}}\psi(x)>0$, then there exists $x' \in U$ such that $\psi(x')= \max_{x \in \ol{U}}\psi(x) >0$. But then, since $u(x_{\zero})+\e_{\zero}h$ is of class $\cC^2$, $Dh(x')\neq0$ and $u(x_{\zero})\geq0$, owing also to the fact that $F$ is proper, we get $$F(x',\e h(x'),\e Dh(x'),\e D^2h(x')) \leq F(x',u(x'),\e Dh(x'),\e D^2h(x'))\leq0,$$ which contradicts our previous claim. By construction, this implies that $\psi\leq0$ in $\ol{B}(x_{\zero},r)$. Since $\psi(x_{\zero})=0$, again this implies that $$F(x_{\zero},\e h(x_{\zero}),\e Dh(x_{\zero}),\e D^2h(x_{\zero})) \leq F(x_{\zero},u(x_{\zero}),\e Dh(x_{\zero}),\e D^2h(x_{\zero}))\leq0 \,\, ,$$ a contradiction with \eqref{contr}.
\end{proof}

As a direct corollary, we obtain the following version of the Strong Maximum Principle (SMP for short).

\begin{theorem}[Strong Maximum Principle] Let $F: \wt{J}^2M \to \bR$ be a proper, lower semicontinuous operator which is lower partially elliptic and has the lower scaling property. Let $\W \subset M$ be a domain and $u \in \USC(\W)$ a viscosity subsolution to $F[u]=0$ on $\W$. If $u$ achieves a nonnegative maximum in $\W$, then $u$ is constant. \label{SMP} \end{theorem}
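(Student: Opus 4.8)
The plan is to deduce Theorem \ref{SMP} from Proposition \ref{propmax} by a standard connectedness argument. Let $m \= \max_{\W} u \geq 0$ and consider the set $\Sigma \= \{x \in \W : u(x) = m\}$. We want to show $\Sigma = \W$; since $\W$ is connected, it suffices to prove that $\Sigma$ is nonempty, closed and open in $\W$. Nonemptiness is the hypothesis that the maximum is attained, and closedness is immediate from the upper semicontinuity of $u$ (the superlevel set $\{u \geq m\}$ is closed in $\W$, and it coincides with $\Sigma$ since $m$ is the maximum). So the only real content is openness.

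To prove openness, fix $x_{\zero} \in \Sigma$, so that $u(x_{\zero}) = m \geq 0$ and $u$ attains a nonnegative maximum (over $\W$, hence also over any subdomain) at $x_{\zero}$. Choose any $\rho$ with $0 < \rho < \tfrac12\min\{R(x_{\zero}),\td(x_{\zero},\p\W)\}$; I claim $B(x_{\zero},2\rho) \subset \Sigma$, which gives a neighbourhood of $x_{\zero}$ inside $\Sigma$. Indeed, pick any $z \in B(x_{\zero},2\rho)$ with $z \neq x_{\zero}$ and set $r \= \tfrac12\td(x_{\zero},z) \in (0,\rho)$. Then $r < \tfrac12\min\{R(x_{\zero}),\td(x_{\zero},\p\W)\}$, so Proposition \ref{propmax} applies: taking the point $y$ on the minimizing geodesic from $x_{\zero}$ to $z$ at distance $r$ from $x_{\zero}$, we have $y \in S(x_{\zero},r)$ and $\td(y,z) = \td(x_{\zero},z) - r = r$, so $z \in \ol{B}(y,r)$. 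To land strictly inside the open ball $B(y,r)$ as required by the Proposition, one replaces $r$ by a slightly larger $r'$ still below the threshold (possible since $r < \rho$) with $y$ the point at distance $r'$ from $x_{\zero}$ along the same geodesic; then $z \in B(y,r')$. Proposition \ref{propmax} then produces $\tilde x \in B(y,r')$ with $u(\tilde x) = u(x_{\zero}) = m$ — but this only tells us \emph{some} point of the ball lies in $\Sigma$, not that $z$ itself does.

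To remedy this and actually reach $z$, the cleanest route is to iterate along the geodesic: subdivide the minimizing geodesic from $x_{\zero}$ to $z$ finely enough and propagate the maximum point by point. More precisely, I would first record the corollary of Proposition \ref{propmax} that if $u$ attains its nonnegative maximum at a point $p \in \W$, then $u \equiv m$ on the closed ball $\ol{B}(p,\delta)$ for every sufficiently small $\delta$ — this follows because $\ol{B}(p,\delta)$ is covered by balls of the form $B(y,r)$ with $y \in S(p,r)$, $r$ small, and each such ball meets $\Sigma$; since the argument works for all small $r$ simultaneously and $\Sigma$ is already known to be closed, one upgrades "meets $\Sigma$" to "is contained in $\Sigma$" by a secondary connectedness/covering argument inside the ball. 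Granting that, $\Sigma$ contains a neighbourhood of each of its points and is therefore open, completing the proof.

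The main obstacle is precisely the gap between "the ball $B(y,r)$ \emph{contains} a point where $u = u(x_{\zero})$" (Proposition \ref{propmax}) and "$u$ equals its maximum on a \emph{whole} neighbourhood of $x_{\zero}$" (what openness of $\Sigma$ requires). The Proposition is engineered so that $y$ ranges over the entire sphere $S(x_{\zero},r)$ and $r$ over an interval, so the union of all such balls $B(y,r)$ already covers a punctured neighbourhood of $x_{\zero}$; the point of $\Sigma$ supplied for each $(y,r)$ then, together with the closedness of $\Sigma$ and a Lindelöf/covering argument, forces $\Sigma$ to contain an open ball around $x_{\zero}$. Making this covering step precise — keeping careful track that the radius thresholds $\tfrac12\min\{R(\cdot),\td(\cdot,\p\W)\}$ vary lower-semicontinuously and stay positive as the base point moves within $\Sigma$ — is the only delicate bookkeeping; everything else is the routine "nonempty + open + closed in a connected set" template.
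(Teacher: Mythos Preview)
Your connectedness framing (nonempty, closed, open) is fine, and nonemptiness and closedness are correctly handled. The gap is exactly where you flag it: you never actually prove $\Sigma$ is open. Proposition~\ref{propmax} says each ball $B(y,r)$ with $y\in S(x_{\zero},r)$ \emph{meets} $\Sigma$, and neither of your proposed upgrades works. Iterating along a geodesic fails because the point $\tilde x(y,r)$ supplied by the Proposition lies \emph{somewhere} in $B(y,r)$ with no control on its position, so you cannot steer a chain of such points toward a prescribed target $z$. The covering/Lindel\"of idea fails for the same structural reason: a closed set can meet every member of an open cover without containing their union, so closedness of $\Sigma$ alone does not promote ``meets each $B(y,r)$'' to ``contains a ball around $x_{\zero}$''.

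The paper closes this gap by arguing the contrapositive via an interior ball, which is both simpler and the classical route. If $\Sigma\subsetneq\W$, pick $y_{\zero}\in\W\setminus\Sigma$ with $\td(y_{\zero},\Sigma)<\td(y_{\zero},\partial\W)$ (possible since $\Sigma$ is closed with nonempty relative boundary in the connected set $\W$); set $r\=\td(y_{\zero},\Sigma)$ and take $x_{\zero}\in\Sigma\cap S(y_{\zero},r)$, so that $u<m$ on $B(y_{\zero},r)$. Now apply Proposition~\ref{propmax} at $x_{\zero}$ with any radius $r'<\min\{r,\tfrac12 R(x_{\zero}),\tfrac12\td(x_{\zero},\partial\W)\}$ and with $y'\in S(x_{\zero},r')$ the point on the minimizing geodesic from $x_{\zero}$ toward $y_{\zero}$: then $B(y',r')\subset B(y_{\zero},r)$ by the triangle inequality, yet the Proposition forces $B(y',r')\cap\Sigma\neq\emptyset$ --- a contradiction. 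This \emph{is} the correct proof of openness in your framework; phrased as a global contradiction it is a two-line argument rather than an elusive covering step.
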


\begin{proof} Let $\tilde{x} \in \W$ be a maximum point for $u$ in $\W$ and set $L\=u(\tilde{x})>0$, $K \= \{x \in \W : u(x)=L\}$. Assume by contradiction that $K \subsetneq \W$. Since $u$ is upper semicontinuous and $\tilde{x}$ is a maximum point, it follows that $K = \{x \in \W : u(x)\geq L\}$ and so it is closed in $\W$. Then, by classical arguments (see e.g. \cite{GT}) we can find a point $y_{\zero} \in \W \setminus K$ such that $\td(y_{\zero},\p\W)>\td(y_{\zero},\p K)=r$. This means that there exists $x_{\zero} \in S(y_{\zero},r)$ such that $u(x_{\zero})=L$, while $u(x)<L$ for any $x \in B(y_{\zero},r)$. However, this is in contradiction with Proposition \ref{propmax}. \end{proof}

\begin{rem}\label{SMPsimp} If the Riemannian manifold $(M,g)$ has nonnegative sectional curvature, then one can replace the condition \ref{(l.p.e.)} in Theorem \ref{SMP} with \begin{enumerate}[label=(l.p.e.'), leftmargin=40pt]
\item\label{(l.p.e.')} there exists a function $\a_{\zero}: M \to [0,+\infty)$ such that $$F(x,0,q,g_x-\a\,q{\otimes}q)>0 \quad \text{ for any $x \in M$, $q \in T_x^*M \setminus\{0\}$, $\a>\a_{\zero}(x)$ } \, .$$ 
\end{enumerate} Indeed, under the hypothesis of nonnegative sectional curvature, by the Rauch Comparison Theorem it follows that $(D^2f_{x_{\zero}})(x) \leq g_x$ for any $x_{\zero} \in M$ and $x \in B(x_{\zero},\inj(x_{\zero}))$ (see e.g. \cite[Lemma 3.1]{C}). This is the same non-totally degeneracy condition in Bardi-Da Lio \cite{BDL1} found in the Euclidean case. \end{rem}

\begin{counterexample}(From \cite[Example 3.17]{BG1} and \cite{KK}) Assume that $(M,g)$ has nonnegative sectional curvature and fix a point $x_{\zero} \in M$. Let us consider the nonlinear PDE $$-\frac{\D u}{1+|\D u|}+f(x)=0 \,\, ,$$ where $f:M \to \bR$ is defined by setting $f(x_{\zero})\=-1$ and $f(x)\=0$ for any $x \in M \setminus\{x_{\zero}\}$. It is straightforward to check that this operator is proper, but the scaling condition and \ref{(l.p.e.')} fail at $x_{\zero}$. In fact, the SMP is violated by the viscosity subsolution $$u: M \to \bR \,\, , \quad u(x)= \begin{cases} 0 & \text{ if } x \neq x_{\zero} \\ 1 & \text{ if } x = x_{\zero} \end{cases} \,\, .$$
\end{counterexample}

We stress now that part of the proof of Proposition \ref{propmax} can be adapted to prove a viscosity version of the Hopf boundary Lemma (see \cite[Appx E]{Ric}  and \cite[Lemma 3.4]{GT} for the corresponding statements in the linear case). Before doing that, we recall that: \begin{enumerate}[label=(i.b.c.), leftmargin=40pt]
\item\label{(i.b.c.)} a domain $\W \subset M$ satisfies the {\it interior ball condition} at a boundary point $x_{\zero} \in \p\W$ if there exist $y \in \W$ and $0<r<R(y)$ such that $B(y,r) \subset \W$ and $S(y,r)\cap \p\W=\{x_{\zero}\}$.
\end{enumerate} Under such hypothesis, for any point $x \in B(y,r)$ there exists a unique unit speed minimizing geodesic $\g_x:[-\d_x,0) \to B(y,r)$ such that $\g_x(-\d_x)=x$ and $\g_x(0)\=\lim_{t \to 0^-}\g_x(t)=x_{\zero}$.

\begin{theorem}[Hopf boundary Lemma]\label{hopf} Let $F: \wt{J}^2M \to \bR$ be a proper, lower semicontinuous operator which is lower partially elliptic and has the lower scaling property. Let $\W \subset M$ be a domain and $u \in \USC(\W)$ a subsolution to $F[u]=0$ on $\W$. Assume that $\W$ satisfies the interior ball condition at some boundary point $x_{\zero} \in \p \W$ and set $u(x_{\zero}) \= \lim_{r \to 0^+} \sup\{u(x): x \in \W \cap B(x_{\zero},r)\}$. If $0\leq u(x_{\zero}) < +\infty$ and $u< u(x_{\zero})$ in $\W$, then for any interior ball $B(y,r) \subset \W$ as in \ref{(i.b.c.)} it holds \beq \liminf_{t \to 0^+} \frac{(u \circ \g_x)(0)-(u \circ \g_x)(-t)}t >0 \quad \text{ for any } x \in B(y_{\zero},r_{\zero}) \,\, , \label{hopfineq} \eeq where $\g_x$ is the unique geodesic as above. \end{theorem}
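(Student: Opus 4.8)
The plan is to run the barrier argument from the proof of Proposition \ref{propmax}, anchored this time at the boundary point $x_{\zero}$ rather than at an interior maximum. Write $L\=u(x_{\zero})\in[0,+\infty)$ and fix an interior ball $B(y,r)\subset\W$ as in \ref{(i.b.c.)}, so that $0<r<R(y)$ and $S(y,r)\cap\p\W=\{x_{\zero}\}$. Since $r$ lies strictly below the convexity radius $R(y)$, a ball $B(y,r')$ with $r<r'<R(y)$ is strongly geodesically convex, so $f_y$ as in \eqref{f_x} is $\cC^2$, regular on $B(y,R(y))$ and satisfies $(D^2f_y)(x)>0$ for every $x\in\ol{B}(y,r)$, in particular at $x_{\zero}\in S(y,r)$. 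I would then take verbatim the barrier
\[
h\colon B(y,R(y))\to\bR \,\, ,\qquad h(x)\=-\exp\big({-}\a f_y(x)\big)+\exp\big({-}\a\tfrac{r^2}2\big) \,\, ,
\]
with $\a>0$ large. As in Proposition \ref{propmax} one gets $h(x_{\zero})=0$, $-1<h<0$ on $B(y,r)$, $(Dh)(x_{\zero})=\a\exp({-}\a\tfrac{r^2}2)(Df_y)(x_{\zero})\neq0$, $(D^2h)(x_{\zero})=\a\exp({-}\a\tfrac{r^2}2)\big((D^2f_y)(x_{\zero})-\a(Df_y)(x_{\zero}){\otimes}(Df_y)(x_{\zero})\big)$, and $\a\exp({-}\a\tfrac{r^2}2)\in(0,1)$ for $\a$ large. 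Exactly as for \eqref{contr}, assumptions \ref{(l.p.e.)} and \ref{(l.s.p.)} (applied with base point $y$ and $s=0$) give $F[h](x_{\zero})>0$; then, writing $s(x)\=\a^{-1}e^{\a f_y(x)}h(x)\in[-\tfrac1\a,0]$ so that $\big(h(x),(Dh)(x),(D^2h)(x)\big)=\a e^{-\a f_y(x)}\big(s(x),(Df_y)(x),(D^2f_y)(x)-\a(Df_y)(x){\otimes}(Df_y)(x)\big)$, the lower semicontinuity of $F$ — applied both to $F[h]$ and to $x\mapsto F\big(x,s(x),(Df_y)(x),(D^2f_y)(x)-\a(Df_y)(x){\otimes}(Df_y)(x)\big)$, which are positive at $x_{\zero}$ — provides, via \ref{(l.s.p.)}, a radius $0<\rho<r$ such that $\e h$ is a strict supersolution of $F[u]=0$ on $U\=B(x_{\zero},\rho)\cap B(y,r)$ for every $\e\in(0,1)$, just as in the proof of Proposition \ref{propmax}.

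Next I would perform the boundary comparison on $U$. On $\p U\cap S(y,r)$ one has $h=0$, so $L+\e h=L$, while $u<L$ at every such point lying in $\W$ and $u(x_{\zero})=L$ by definition; on the compact set $K\=\p B(x_{\zero},\rho)\cap\ol{B}(y,r)$, which is contained in $\W$, upper semicontinuity of $u$ gives $\max_Ku=L-\d'$ with $\d'>0$, so that taking $\e_{\zero}\=\min\{\tfrac12,\,\d'/\max_K|h|\}\in(0,1)$ yields $u\leq L+\e_{\zero}h$ on $K$ as well (here $\max_K|h|>0$ because $K$ meets the open ball $B(y,r)$). Thus $\psi\=u-(L+\e_{\zero}h)$, with the convention $\psi(x_{\zero})\=0$, is upper semicontinuous on the compact set $\ol{U}$ and $\leq0$ on $\p U$. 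If $\psi$ attained a positive maximum at some $x'\in U$, then $L+\e_{\zero}h+\psi(x')$ would be a $\cC^2$ function touching $u$ from above at $x'$; since $(Dh)(x')\neq0$ (because $\td(x',y)>r-\rho>0$, hence $x'\neq y$) and $\e_{\zero}h(x')\leq u(x')$ (using $L\geq0$ and $\psi(x')>0$), the subsolution property and the properness of $F$ would give $F[\e_{\zero}h](x')\leq F\big(x',u(x'),\e_{\zero}(Dh)(x'),\e_{\zero}(D^2h)(x')\big)\leq0$, contradicting that $\e_{\zero}h$ is a strict supersolution on $U$. Therefore $u\leq L+\e_{\zero}h$ on $\ol{U}$, in particular on $U$.

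Finally I would transfer this to the geodesics $\g_x$. Given $x\in B(y,r)$, for $t>0$ small one has $\g_x(-t)\in U$, hence $u(\g_x(-t))\leq L+\e_{\zero}h(\g_x(-t))$, and since $h(x_{\zero})=0$,
\[
\frac{(u\circ\g_x)(0)-(u\circ\g_x)(-t)}{t}=\frac{L-u(\g_x(-t))}{t}\geq\e_{\zero}\,\frac{h(x_{\zero})-h(\g_x(-t))}{t} \,\, ,
\]
whose right-hand side converges, as $t\to0^+$, to $\e_{\zero}\,(Dh)(x_{\zero})(\g_x'(0))$ by one-sided differentiability of the $\cC^2$ map $t\mapsto h(\g_x(-t))$. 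Since $(Dh)(x_{\zero})(\g_x'(0))=\a\exp({-}\a\tfrac{r^2}2)\,(Df_y)(x_{\zero})(\g_x'(0))$, the whole statement reduces to the transversality $(Df_y)(x_{\zero})(\g_x'(0))>0$. This I would get by observing that $t\mapsto f_y(\g_x(t))$ is $\cC^2$ on $[-\d_x,0]$, is strictly less than $\tfrac{r^2}2=f_y(x_{\zero})$ for $t<0$ and equals $\tfrac{r^2}2$ at $t=0$, hence attains a maximum at $t=0$, so its left derivative there, namely $(Df_y)(x_{\zero})(\g_x'(0))$, is nonnegative; were it zero, then — $\g_x$ being a geodesic — the second derivative at $0$ would be $(D^2f_y)(x_{\zero})(\g_x'(0),\g_x'(0))>0$, forcing $f_y(\g_x(t))>\tfrac{r^2}2$ for $t<0$ near $0$, a contradiction.

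I expect the only genuine subtleties to be this transversality claim — which hinges on $r$ being below the convexity radius, so that $D^2f_y>0$ holds up to and including the sphere $S(y,r)$ — and the bookkeeping of the order in which $\a$, $\rho$ and $\e_{\zero}$ are chosen (with $\a$ large enough that the inequalities defining \ref{(l.p.e.)} and \ref{(l.s.p.)} hold on a full neighbourhood of $x_{\zero}$); the remainder is a direct adaptation of the argument in Proposition \ref{propmax}.
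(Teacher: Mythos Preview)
Your proof is correct and follows essentially the same route as the paper's: the same barrier $h$, the same comparison on $U=B(x_{\zero},\rho)\cap B(y,r)$ yielding $u\leq u(x_{\zero})+\e_{\zero}h$ on $\ol{U}$, and the same liminf estimate. The paper's proof is terser --- it simply invokes ``the same argument as in the proof of Proposition~\ref{propmax}'' and asserts the final transversality $g_{x_{\zero}}\big((\n f_y)(x_{\zero}),\dot\g_x(0)\big)>0$ without the second-order justification you supply via $(D^2f_y)(x_{\zero})>0$.
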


\begin{proof} Let $B(y,r) \subset \W$ be as in \ref{(i.b.c.)}. Let also $\a>0$, $f_{y}(x) = \tfrac12\td(x,y)^2$ as in \eqref{f_x} and consider the function $$h: B(y,R(y)) \to \bR \,\, , \quad h(x) \= -\exp\big({-}\a f_{y}(x)\big)+\exp\big({-}\a\tfrac{r^2}2\big) \,\, .$$ By repeating the same argument as in the proof of Proposition \ref{propmax}, one can choose $\a>0$ big enough, $0<\tilde{r}< R(y)-r$ small enough and $0<\e<1$ small enough such that $u(x) \leq u(x_{\zero})+\e h(x)$ for any $x \in \ol{U}$, with $U \= B(x_{\zero},\tilde{r}) \cap B(y,r)$. Then, setting $c\=\a\exp\big({-}\a\tfrac{r^2}2\big)$ we get $$\liminf_{t \to 0^+} \frac{(u \circ \g_x)(0)-(u \circ \g_x)(-t)}t \geq \e \diff h(x_{\zero})(\dot{\g}_x(0)) = \e\,c\,g_{x_{\zero}}\big((\n f_{y})(x_{\zero}),\dot{\g}_x(0)\big)>0$$ which concludes the proof. \end{proof}

\begin{rem} In the hypotheses of Theorem \ref{hopf}, if $u \in \cC^1(\bar{\W})$, then \eqref{hopfineq} reads as $$\diff u(x_{\zero})(v)>0 \quad \text{ for any $v \in T_{x_{\zero}}M$ such that $g_{x_{\zero}}\big((\n f_{y})(x_{\zero}),v\big)>0$} \,\, ,$$ which is the classical statement of the Hopf Boundary Lemma (see \cite[Lemma 3.4]{GT} and \cite[Appx E]{Ric}). \end{rem}

Finally, we remark that Theorem \ref{SMP} can be used to derive a version of the Strong Minimum Principle (SmP for short). Indeed 

\begin{theorem}[Strong Minimum Principle] Let $F: \wt{J}^2M \to \bR$ be a proper, upper semicontinuous operator which is upper partially elliptic \ref{(u.p.e.)} and has the upper scaling property \ref{(u.s.p.)}. Let $\W \subset M$ be a domain and $v \in \LSC(\W)$ a viscosity supersolution to $F[u]=0$ on $\W$. If $v$ achieves a nonpositive minimum in $\W$, then $v$ is constant. \label{SmP} \end{theorem}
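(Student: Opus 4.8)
The plan is to deduce the statement from Theorem~\ref{SMP} by conjugating with the fibrewise reflection $(s,q,Q)\mapsto(-s,-q,-Q)$ of $\wt J^2M$. Concretely, I would introduce
$$\hat F(x,s,q,Q)\=-F(x,-s,-q,-Q)\qquad\text{on }\wt J^2M\,,$$
and set $u\=-v$. Since $v\in\LSC(\W)$ we have $u\in\USC(\W)$, and $v$ attains a nonpositive interior minimum at a point $x_{\zero}$ precisely when $u$ attains a nonnegative interior maximum at $x_{\zero}$. The first point to check is that $u$ is a viscosity subsolution of $\hat F[u]=0$ on $\W$: if $\f\in\cC^2(\W,\bR)$ and $x\in\W$ is a local maximum of $u-\f$ with $D\f(x)\neq0$, then $x$ is a local minimum of $v-(-\f)$ and $D(-\f)(x)=-D\f(x)\neq0$, so the supersolution inequality for $v$ gives $F(x,v(x),-D\f(x),-D^2\f(x))\geq0$, whence
$$\hat F\big(x,u(x),D\f(x),D^2\f(x)\big)=-F\big(x,v(x),-D\f(x),-D^2\f(x)\big)\leq0\,.$$

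The substance of the proof is then to verify that $\hat F$ inherits from $F$ the four hypotheses of Theorem~\ref{SMP}. Properness is immediate (if $r\leq s$ and $Q\leq P$ then $-r\geq-s$ and $-P\leq-Q$, so $F(x,-r,-q,-P)\geq F(x,-s,-q,-Q)$, i.e. $\hat F(x,r,q,P)\leq\hat F(x,s,q,Q)$), and $\hat F$ is lower semicontinuous because $-F$ is and the reflection is continuous and maps $\wt J^2M$ onto itself. For lower partial ellipticity and the lower scaling property, at each $x_{\zero}\in M$ I would use the very functions $\a_{x_{\zero}}$ and $\h_{x_{\zero}}$ supplied by \ref{(u.p.e.)} and \ref{(u.s.p.)} for $F$: writing $q\=(Df_{x_{\zero}})(x)$ and using the identity $q\otimes q=(-q)\otimes(-q)$, one has
$$\hat F\big(x,0,(Df_{x_{\zero}})(x),(D^2f_{x_{\zero}})(x)-\a\,(Df_{x_{\zero}})(x){\otimes}(Df_{x_{\zero}})(x)\big)=-F\big(x,0,-q,\a\,(-q){\otimes}(-q)-(D^2f_{x_{\zero}})(x)\big)\,,$$
which, by \ref{(u.p.e.)} with $\a$ large, is positive; the scaling inequality is obtained the same way, the range $s\in[-1,0]$ for $\hat F$ corresponding to $-s\in[0,1]$ for $F$. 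With all four properties in hand, Theorem~\ref{SMP} applied to $\hat F$ and $u$ yields that $u$, hence $v$, is constant.

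I expect the only genuinely delicate step to be the sign bookkeeping in the previous paragraph: the conditions \ref{(u.p.e.)} and \ref{(u.s.p.)} are phrased through the differential $(Df_{x_{\zero}})(x)$ of the squared distance, whereas after the reflection the covector appearing in the first slot is $-(Df_{x_{\zero}})(x)$, so one must be sure this causes no harm, which is the case once the $Q$-slot is rewritten via $q\otimes q=(-q)\otimes(-q)$. Should one prefer to avoid the reflection altogether, an entirely equivalent route is to rerun the proofs of Proposition~\ref{propmax} and Theorem~\ref{SMP} verbatim, replacing the barrier $h$ by $-h$, ``subsolution'' by ``supersolution'', ``maximum'' by ``minimum'', each inequality by its reverse, and the conditions \ref{(l.p.e.)}, \ref{(l.s.p.)} by \ref{(u.p.e.)}, \ref{(u.s.p.)}; the mechanism --- producing a strict sub/supersolution from the scaling property and then invoking properness to absorb the sign of the extremal value --- is unchanged.
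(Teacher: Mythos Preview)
Your approach is exactly the paper's: the paper's entire proof is the one-line remark that $F^-(x,s,q,Q)\=-F(x,-s,-q,-Q)$ satisfies the hypotheses of Theorem~\ref{SMP}, and your $\hat F$ is this $F^-$, with the passage to $u=-v$ and the verification of properness and lower semicontinuity spelled out correctly.

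One caveat on the step you yourself single out as delicate: the identity $q\otimes q=(-q)\otimes(-q)$ only repairs the Hessian slot, not the gradient slot. After the reflection, the second argument of $F$ is $-(Df_{x_{\zero}})(x)$, whereas \ref{(u.p.e.)} and \ref{(u.s.p.)} are literally stated with $+(Df_{x_{\zero}})(x)$ there; so your displayed identity does not, strictly speaking, place you inside the hypothesis of \ref{(u.p.e.)}. The paper makes the very same tacit identification (it asserts in the proof of Lemma~\ref{uepe} that ``$F^-$ lower partially elliptic'' is equivalent to ``$F$ upper partially elliptic''), so you are not diverging from the paper here --- but the $q\otimes q$ observation is not what closes the gap, and your alternative route of rerunning Proposition~\ref{propmax} with $-h$ runs into the same sign in the gradient slot.
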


\begin{proof} Given such a $F: \wt{J}^2M \to \bR$, it is straightforward to realize that the operator $$F^-: \wt{J}^2M \to \bR \,\, , \quad F^-(x,s,q,Q) \= -F(x,-s,-q,-Q)$$ verifies all the hypothesis of Theorem \ref{SMP}. Hence, we get the thesis. \end{proof}

\begin{rem} The proof of Proposition \ref{propmax}, Theorem \ref{SMP} and Theorem \ref{SmP} remain untouched if we replace the conditions \ref{(l.s.p.)} and \ref{(u.s.p.)}, respectively, with the following: \begin{enumerate}[label=(l.s.p.'), leftmargin=40pt]
\item\label{(l.s.p.')} there exists $\hat{F}$ satisfying \ref{(l.s.p.)} and, for any $x_{\zero} \in M$, a third function $\hat{\h}_{x_{\zero}}: (0,1) \to \bR$ such that $\lim_{c \to 0^+}\hat{\h}_{x_{\zero}}(c)=0$ and $$\begin{aligned}
F\big(x,cs,&c(Df_{x_{\zero}})(x),c\big((D^2f_{x_{\zero}})(x){-}\a\,(Df_{x_{\zero}})(x){\otimes}(Df_{x_{\zero}})(x)\big)\big) \geq \\
&\hat{F}\big(x,cs,c(Df_{x_{\zero}})(x),c\big((D^2f_{x_{\zero}})(x){-}\a\,(Df_{x_{\zero}})(x){\otimes}(Df_{x_{\zero}})(x)\big)\big)+\h_{x_{\zero}}(c)\hat{\h}_{x_{\zero}}(c)
\end{aligned}$$ for any $c \in (0,1]$, $s \in [-1,0]$ and $x \in B(x_{\zero},R(x_{\zero}))$;
\end{enumerate} \begin{enumerate}[label=(u.s.p.'), leftmargin=40pt]
\item\label{(u.s.p.')} there exists $\hat{F}$ satisfying \ref{(u.s.p.)} and, for any $x_{\zero} \in M$, a third function $\hat{\h}_{x_{\zero}}: (0,1) \to \bR$ such that $\lim_{c \to 0^+}\hat{\h}_{x_{\zero}}(c)=0$ and $$\begin{aligned}
F\big(x,cs,&c(Df_{x_{\zero}})(x),c\big((D^2f_{x_{\zero}})(x){-}\a\,(Df_{x_{\zero}})(x){\otimes}(Df_{x_{\zero}})(x)\big)\big) \leq \\
&\hat{F}\big(x,cs,c(Df_{x_{\zero}})(x),c\big((D^2f_{x_{\zero}})(x){-}\a\,(Df_{x_{\zero}})(x){\otimes}(Df_{x_{\zero}})(x)\big)\big)+\h_{x_{\zero}}(c)\hat{\h}_{x_{\zero}}(c)
\end{aligned}$$ for any $c \in (0,1]$, $s \in [0,1]$ and $x \in B(x_{\zero},R(x_{\zero}))$.
\end{enumerate} \label{modscal} \end{rem}

We are going to list below some examples that fulfill the hypotheses of our SMP and SmP.

\begin{example} All the universal Riemannian operators listed in Section \ref{URiem}, with the only one exception of the mean curvature operator, satisfy all the hypotheses of Theorem \ref{SMP} and Theorem \ref{SmP}. Hence, both the SMP and the SmP are satisfied.

Actually, we can cover more general universal Riemannian operators, e.g. we can consider $$\ul{\eF}: \bR^n \oplus \Sym(n) \to \bR \,\, , \quad \ul{\eF}(v,A) \= |v|^{\b}\,\ul{\eM}^{\pm}_{\l,\L}(A) \quad \text{ with } \b>1 \,\, .$$ Then, the operator $\eF$ associated to $\ul{\eF}$, which has been studied in \cite{Biri}, is proper and positively $(\b+1)$-homogeneous. Moreover, by \cite[Lemma 2.10]{CC} we get $$\begin{aligned}
&\ul{\eF}(v,A-\a\,v{\otimes}v^{\mathsmaller{T}}) \geq |v|^{\b}\ul{\eM}^{-}_{\l,\L}(A)+\a\l|v|^{2+\b} \,\, ,\\
&\ul{\eF}(v,\a\,v{\otimes}v^{\mathsmaller{T}}-A) \leq -|v|^{\b}\ul{\eM}^{-}_{\l,\L}(A)-\a\l|v|^{2+\b}
\end{aligned}$$ and thus it is partially elliptic. Hence, both the SMP and the SmP hold true. This would extend \cite[Proposition 2.15]{Biri} to Riemannian manifolds. \label{exSMP}\end{example}

\begin{remark} To the best of our knowledge, this is the first SMP for viscosity solutions of the $p$-Laplace equation on Riemannian manifolds. Recently, some results have been established in \cite{BMPR} for $\cC^1$ distributional solutions of equations driven by the $p$-Laplacian on Riemannian manifolds. However, while in the Euclidean case weak (distributional) and viscosity solutions agree (see \cite{Manf}), we are not aware of a similar result in the more general setting of Riemannian manifolds. \end{remark}

\begin{example} We stress that neither \ref{(l.s.p.)} nor \ref{(u.s.p.)} hold true for the mean curvature operator $\eF_{\rm mc}$, but still it fulfills both \ref{(l.s.p.')}, \ref{(u.s.p.')} and so one can obtain the SMP and the SmP in virtue of Remark \ref{modscal} (see also \cite[Sec 3]{BDL1}). The same is true for the capillary surface equation \beq nH\,u(1+|\n u|^2)^{\frac32}-(1+|\n u|^2)\D u +D^2u(\n u,\n u)=0 \,\, , \quad \text{ with $H \in \bR$ } \, . \label{capeq} \eeq Let us notice that the SMP for the mean curvature equation on Riemannian manifolds have been established recently in \cite{BMPR}, while that for the capillary equation appears to be new. \end{example}

\begin{example} Let $\eF$ be one a positively $h$-homogeneous universal Riemannian operator, with $h \in \bR$. Let $a,c : M \to \bR$ be two continuous functions such that $c(x) \geq 0$, $a(x)>0$ for any $x \in M$. Pick a number $k>0$ and assume that either $c \equiv 0$, or $k \geq h$. Then, the operator $$F(x,s,q,Q) \= a(x)\eF(x,q,Q)+c(x)|s|^{k-1}s$$ verifies both the SMP and the SmP. We stress that the capillarity equation \eqref{capeq} is of this form with $\eF=\eF_{\rm mc}$, $k=1$, $a \equiv 1$ and $c \equiv nH$. \end{example}

\begin{example} Let $\eF$ be a {\it Hessian operator}, that is a universal Riemannian operator associated to an Euclidean operator $\ul{\eF}: \Sym(n) \to \bR$. Assume that $\eF$ is continuous, uniformly elliptic and positively $1$-homogeneous. Let also $b \in \cC^0(M,TM)$ a continuous and bounded vector field on $M$. Then, the operators analyzed in \cite{Kim} of the general form $$F[u] \= |\n u|^{p-2}\big(\eF[u]+g(\n u,b)\big)$$ verify both the SMP and the SmP. \end{example}

\medskip

\section{An application: the Strong Comparison Principle} \label{sec;SCP}

Let $(M,g)$ be a Riemannian manifold. In this section, we are going to establish some consequences of the Strong Maximum Principle. \smallskip

%Firstly, we recall the following
%\begin{definition} A fully nonlinear equation {\it $F[u]=0$ satisfies the Strong Comparison Principle (SCP for short) on a domain $\W \subset M$} if $$u \leq v \text{ on $\W$ } \,\, , \quad u(x_{\zero})=v(x_{\zero}) \,\,\text{ for some $x_{\zero} \in \W$} \quad\Longrightarrow\quad u = v \text{ on $\W$ }$$ for any choice of a viscosity subsolution $u \in \USC(\W)$ and a viscosity supersolution $v \in \LSC(\W)$. \end{definition}

In the case of linear equations, being the difference of two solutions a solution as well, it turns out that the (SCP) is equivalent to the (SMP). In the nonlinear case, it is worth observing that the (SCP) implies the (SMP) whenever a constant solves the equation. However, the converse may not hold. In order to have a guideline for the general case, we begin with the following

\begin{lemma} Let $\W \subset M$ be a domain, $x_{\zero} \in \W$ a point and $0<\l\leq\L$ two constants. Consider a viscosity subsolution $u \in \USC(\W)$ and a classical supersolution $v \in \cC^2(\W,\bR)$ of $\eM_{\l,\L}^-[u]=0$ on $\W$ satisfying $u \leq v$. Then, either $u(x) < v(x)$ for any $x \in \W$ or $u\equiv v$. \label{SCP1} \end{lemma}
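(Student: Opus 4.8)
The plan is to reduce Lemma \ref{SCP1} to an application of the Strong Maximum Principle (Theorem \ref{SMP}) applied to the difference $w \= u - v$. Since $v$ is of class $\cC^2$, the function $w$ is upper semicontinuous on $\W$, and the hypothesis $u \leq v$ gives $w \leq 0$; moreover the conclusion we want is precisely that either $w < 0$ everywhere or $w \equiv 0$, i.e. the set $\{w = 0\} = \{w \geq 0\}$ is either empty or all of $\W$. So it suffices to show that $w$ is a viscosity subsolution on $\W$ of some operator $G$ that is proper, lower semicontinuous, lower partially elliptic, and has the lower scaling property, after which Theorem \ref{SMP} applied to $w$ (which attains the nonnegative maximum value $0$ if $\{w=0\}\neq\emptyset$) forces $w$ to be constant, hence identically $0$.

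First I would identify the right operator. Fix a $\cC^2$ test function $\f$ and a local maximum point $x$ of $w - \f = u - (v+\f)$ with $D\f(x) \neq 0$. Since $v \in \cC^2$, $v + \f$ is an admissible test function for $u$; the subsolution property of $u$ requires $D(v+\f)(x) = Dv(x) + D\f(x) \neq 0$, which I will need to arrange. Assuming for the moment $Dv(x) + D\f(x)\neq 0$, one gets $\eM^-_{\l,\L}\big(x, D^2v(x) + D^2\f(x)\big) \leq 0$, while the supersolution inequality for $v$ gives $\eM^-_{\l,\L}\big(x, D^2v(x)\big) \geq 0$. Using that $\eM^-_{\l,\L}$ is a Pucci extremal operator, subadditivity-type estimate $\eM^-_{\l,\L}(A+B) \geq \eM^-_{\l,\L}(A) + \eM^-_{\l,\L}(B)$ yields $\eM^-_{\l,\L}\big(x,D^2\f(x)\big) \leq \eM^-_{\l,\L}\big(x, D^2v(x)+D^2\f(x)\big) - \eM^-_{\l,\L}\big(x,D^2v(x)\big) \leq 0$, so $w$ is a viscosity subsolution of $\eM^-_{\l,\L}[w] = 0$. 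The operator $\eM^-_{\l,\L}$ is proper, continuous, positively $1$-homogeneous (hence it has the lower scaling property with $\h \equiv \mathrm{id}$), and uniformly elliptic, hence partially elliptic by Lemma \ref{uepe}; all hypotheses of Theorem \ref{SMP} are met.

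The main obstacle is the gradient-nonvanishing issue: the viscosity inequalities for $u$, $v$, and $w$ are only required at points where the relevant gradient is nonzero, so one must be careful that the test-function manipulations stay within the set where all the gradients are nonzero. The cleanest route is to observe that $v \in \cC^2$ is a classical supersolution, so $\eM^-_{\l,\L}\big(x, D^2v(x)\big) \geq 0$ holds at every $x$ with $Dv(x) \neq 0$; but actually since $\eM^-_{\l,\L}$ depends only on the Hessian, the inequality $\eM^-_{\l,\L}(x, D^2 v(x)) \ge 0$ is a pointwise condition independent of $Dv(x)$, and one may check it is automatically imposed by continuity even where $Dv$ vanishes (or simply restrict attention to the open set $\{Dv \neq 0\}$ and handle $\{Dv = 0\}$ separately — on that set the test function $\f$ itself has $D\f(x)\neq 0$, giving $D(v+\f)(x)\neq 0$). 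I would spell this out: at a local max $x$ of $w-\f$ with $D\f(x)\ne0$, either $Dv(x)+D\f(x)\ne0$ and the argument above applies directly, or $Dv(x)+D\f(x)=0$; in the latter case perturb $\f$ by adding a small linear-in-normal-coordinates term to restore $D(v+\f)(x)\ne0$ while keeping $x$ a strict local max, pass to the limit using continuity of $\eM^-_{\l,\L}$ in $Q$, and conclude $\eM^-_{\l,\L}(x,D^2\f(x))\le0$ again. Once $w$ is shown to be a viscosity subsolution of $\eM^-_{\l,\L}[w]=0$, Theorem \ref{SMP} finishes the proof: if $\{x : w(x) = 0\}$ is nonempty then $w$ attains the nonnegative maximum $0$, so $w$ is constant $=0$, i.e. $u \equiv v$; otherwise $w < 0$ on all of $\W$, i.e. $u(x) < v(x)$ for every $x$.
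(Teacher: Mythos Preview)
Your proposal is correct and follows essentially the same route as the paper: show that $w=u-v$ is a viscosity subsolution of $\eM^-_{\l,\L}[w]=0$ via the subadditivity of $\eM^-_{\l,\L}$ and the fact that $v$ is a classical supersolution, then invoke Theorem \ref{SMP}. The only difference is that you devote substantial effort to the gradient-nonvanishing technicality (needed because the paper's viscosity definition restricts to test functions with $D\f(x)\neq0$), which the paper simply glosses over---your extra care is justified but arguably unnecessary here, since $\eM^-_{\l,\L}$ depends only on the Hessian and extends continuously to $q=0$.
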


\begin{proof} It is sufficient to prove that $w \=u-v$ is a subsolution of $\eM_{\l,\L}^-[u]=0$ on $\W$ and then the conclusion follows by Theorem \ref{SMP}. Indeed, since $w\leq0$ by assumptions, if there exists $x_{\zero} \in \W$ such that $w(x_{\zero})=0$, then the SMP would imply $w=0$ on the whole $\W$.

Let us prove now that $w$ is a subsolution to $\eM_{\l,\L}^-[u]=0$ following \cite[Lemma 2.12]{CC}. Let $\psi \in \cC^2(\W,\bR)$ be such that $w{-}\psi \leq 0$ around $x_{\zero}$ and $w(x_{\zero})=\psi(x_{\zero})$. Since $u$ is a viscosity subsolution, we get $$\eM_{\l,\L}^-[\psi+v](x_{\zero})\leq0 \,\, .$$ Moreover, being $\eM_{\l,\L}^-$ sub-additive (see e.g. \cite[Lemma 2.10]{CC}), it holds that $$\eM_{\l,\L}^-[\psi](x_{\zero}) +\eM_{\l,\L}^-[v](x_{\zero})\leq0 \,\, .$$ Since $\eM_{\l,\L}^-[v](x_{\zero})\geq0$ in classical sense by hypothesis, we conclude that $\eM_{\l,\L}^-[\psi](x_{\zero})\leq0$ as desired. \end{proof}

\begin{remark} Let us point out that the proof of Lemma \ref{SCP1} still holds true for any continuous, uniformly elliptic operator $F: J^2M \to \bR$ which is sub-additive, i.e. satisfying $F[\f_1-\f_2] \leq F[\f_1]-F[\f_2]$ for any pair of $\cC^2$ functions $\f_1,\f_2$. Examples of such operators are the Pucci's extremal operators and, more in general, the Hamilton-Jacobi-Bellman operators (see \cite{BC}). \end{remark}

Let us stress now that handling viscosity solutions requires more care and the use of the strong and weak maximum principles, as in \cite{IY,BG1}. Indeed, the fact that $w=u-v$ is a viscosity subsolution to the initial equation is not straightforward in general (see e.g. \cite[Thm 5.3]{CC} for the uniformly elliptic case). Below, we prove the (SCP) by combining the SMP in Theorem \ref{SMP} and the weak comparison principle in \cite[Cor 4.8]{Azagra} following the same lines of \cite[Thm 5.3 and Prop 5.5]{IY}. \smallskip

Before doing that, given a proper and continuous operator $F: J^2M \to \bR$, we recall the following two properties: \begin{enumerate}[label=(i.u.c.), leftmargin=40pt]
\item\label{(i.u.c.)} $F$ is {\it intrinsically uniformly continuous with respect to $x$} if there exists a modulus of continuity $\w$ such that \beq \big|F(y,s,q,Q)-F(x,s,L_{xy}^*q,L_{xy}^*Q)\big| \leq \w\big(\td(x,y)\big) \label{iucw} \eeq for any $x, y \in M$ with $\td(x,y) < \min\{\inj(x),\inj(y)\}$ and $(s,q,Q) \in J^2_yM$;
\end{enumerate} \begin{enumerate}[label=(u.l.p.), leftmargin=40pt]
\item\label{(u.l.p.)} $F$ satisfies the {\it uniform Lipschitz condition} if there exists $C>0$ such that $$|F(x,s_1,q_1,Q)-F(x,s_2,q_2,Q)|\leq C\,\big(|s_1-s_2|+|q_1-q_2|\big)$$ for any $x \in M$ and $(s_1,q_1,Q),(s_2,q_2,Q) \in J^2_xM$.
\end{enumerate}

\begin{remark} Any universal Riemannian operator $\eF$ verifies \eqref{iucw} with $\w=0$. Indeed, the $\fO(n)$-invariance implies that the corresponding Euclidean operator is of the form $\ul{\eF}(s,v,A)= f(s,|v|, \text{eigenvalues of $A$})$. Then, since $|L_{xy}^*q|=|q|$ and $Q, L_{xy}^*Q$ have the same eigenvalues, condition \ref{(i.u.c.)} is fulfilled. \end{remark}

Then, the result reads as follows.

\begin{prop} Assume that $(M,g)$ has non-negative sectional curvature and let $\W \subset M$ be a domain. Let $F: J^2M \to \bR$ be a continuous operator which is uniformly elliptic and satisfying both \ref{(i.u.c.)}, \ref{(u.l.p.)}. Consider a viscosity subsolution $u \in \USC(\W)$ and a viscosity supersolution $v \in \LSC(\W)$ of $F[u]=0$ on $\W$ satisfying $u \leq v$. Then, either $u(x) < v(x)$ for any $x \in \W$ or $u\equiv v$. \label{SCP2} \end{prop}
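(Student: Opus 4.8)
The strategy, following \cite[Thm 5.3 and Prop 5.5]{IY}, is to reduce to the Strong Maximum Principle of Theorem \ref{SMP} by showing that $w \= u - v$ is a viscosity subsolution on $\W$ of an auxiliary equation of the form $G[w] = 0$ that is itself proper, lower semicontinuous, lower partially elliptic and has the lower scaling property; since $w \leq 0$, if $w(x_{\zero}) = 0$ for some $x_{\zero}$ then Theorem \ref{SMP} forces $w \equiv 0$ and hence $u \equiv v$, while otherwise $u < v$ everywhere. The delicate point is that, for viscosity solutions, $w$ need not satisfy any equation one can write down directly; instead one produces the subsolution property of $w$ through a doubling-of-variables argument combined with the weak comparison principle of \cite[Cor 4.8]{Azagra}.

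\textbf{Construction of the linearized inequality.} The natural candidate is the "partially linearized" operator obtained from the uniform ellipticity bracket: by \eqref{extPucci}, for $P \geq 0$ one has $\eM^-_{\l,\L}(x,P) \leq F(x,s,q,Q+P)-F(x,s,q,Q)$, and combining with \ref{(u.l.p.)} one gets, for any $(s_1,q_1,Q_1)$, $(s_2,q_2,Q_2)$,
\[
F(x,s_1,q_1,Q_1) - F(x,s_2,q_2,Q_2) \geq \eM^-_{\l,\L}\big(x,Q_1-Q_2\big) - C\big(|s_1-s_2| + |q_1-q_2|\big).
\]
The plan is to define $G(x,s,q,Q) \= \eM^-_{\l,\L}(x,Q) - C(|s| + |q|)$ (up to adjusting signs so that properness in $s$ holds; one may need $G(x,s,q,Q) = \eM^-_{\l,\L}(x,Q) - C|q| + Cs$ exploiting $s \leq 0$, so that it is nondecreasing in $s$ on the relevant range). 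This $G$ is continuous, proper, and uniformly elliptic, hence partially elliptic by Lemma \ref{uepe}; the linear-in-$q$ and linear-in-$s$ terms are compatible with the lower scaling property because $\eM^-_{\l,\L}$ is $1$-homogeneous and the perturbation is at most linear. One must check \ref{(l.s.p.)} holds for $G$ (or invoke \ref{(l.s.p.')} via Remark \ref{modscal}, since the presence of a lower-order gradient term produces exactly the kind of additive correction $\h_{x_{\zero}}(c)\hat\h_{x_{\zero}}(c)$ allowed there, with $\hat\h_{x_{\zero}}(c) \to 0$).

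\textbf{Showing $w$ is a viscosity subsolution of $G[w]=0$.} This is the heart of the argument and the main obstacle. Fix $\f \in \cC^2$ and a strict local maximum point $\bar x$ of $w - \f$ with $D\f(\bar x) \neq 0$. One doubles variables: for $\e > 0$ consider
\[
\Phi_\e(x,y) \= u(x) - v(y) - \tfrac{1}{2\e}\td(x,y)^2 - \f(x),
\]
take a maximizer $(x_\e, y_\e)$ near $(\bar x,\bar x)$, and use the Riemannian theorem on sums (as developed in \cite{Azagra}, valid here because the sectional curvature is nonnegative, which is precisely where the curvature hypothesis enters — see Remark \ref{seccurv}) to obtain jets $(p_\e, X_\e) \in \ol{J}^{2,+}u(x_\e)$ and $(p_\e', Y_\e) \in \ol{J}^{2,-}v(y_\e)$ with $p_\e = \tfrac1\e \exp_{x_\e}^{-1}(\cdots) + D\f(x_\e)$, $p_\e'$ the parallel transport of the penalization gradient, and $X_\e - L_{y_\e x_\e}^* Y_\e \leq$ (penalization Hessian) $+ D^2\f(x_\e) + o(1)$, together with $\td(x_\e,y_\e)^2/\e \to 0$ and $x_\e, y_\e \to \bar x$. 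Writing the subsolution inequality for $u$ at $(x_\e, X_\e)$ and the supersolution inequality for $v$ at $(y_\e, Y_\e)$, subtracting, and using \ref{(i.u.c.)} to compare $F$ at $x_\e$ and $y_\e$ along parallel transport, one passes to the limit $\e \to 0$; the curvature term in the comparison of Hessians under parallel transport is controlled using nonnegative curvature so that it contributes with a favorable sign (again via $(D^2 f_{x_\e})(y_\e) \leq g$ type estimates, cf. Remark \ref{SMPsimp} and \cite[Lemma 3.1]{C}), yielding $F(\bar x, u(\bar x), D\f(\bar x), D^2\f(\bar x)) - F(\bar x, v(\bar x), D\f(\bar x), D^2\f(\bar x)) \leq 0$. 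The uniform ellipticity and uniform Lipschitz bounds then convert this into $G[\f](\bar x) = \eM^-_{\l,\L}(\bar x, D^2\f(\bar x)) - C|D\f(\bar x)| + Cw(\bar x) \leq 0$, i.e. $w$ is a $G$-subsolution. The weak comparison principle of \cite[Cor 4.8]{Azagra} is used to guarantee that the maximizers $(x_\e,y_\e)$ behave well and that the localization is legitimate near $\bar x$.

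\textbf{Conclusion.} Once $w \in \USC(\W)$ is known to be a viscosity subsolution of $G[w] = 0$ with $G$ satisfying the hypotheses of Theorem \ref{SMP} (using Remark \ref{modscal} if needed for the scaling condition), and since $w \leq 0$ everywhere, either $w < 0$ on all of $\W$ — in which case $u < v$ — or $w$ attains the value $0$, which is a nonnegative interior maximum, and Theorem \ref{SMP} gives $w \equiv 0$, i.e. $u \equiv v$. The main obstacle, as noted, is the doubling-of-variables estimate producing the jets with the right curvature-sign control; the rest is bookkeeping that the perturbed Pucci operator $G$ still lies in the class covered by our Strong Maximum Principle.
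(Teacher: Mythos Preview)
Your overall strategy---reduce to the SMP by showing $w=u-v$ solves a Pucci-type inequality---matches the paper's, but the technical core (``showing $w$ is a subsolution'') is carried out very differently, and your sketch of that core contains a genuine confusion.

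The paper does \emph{not} run a doubling-of-variables argument on $w$. Instead it introduces the linearized operator $\Delta_F(x,s,q,Q)=\inf\{F(x,s+\tilde s,q+\tilde q,Q+\tilde Q)-F(x,\tilde s,\tilde q,\tilde Q)\}$ and its l.s.c.\ envelope $(\Delta_F)_*$, and proves (Lemma~\ref{lemmaDF}) that $w$ is a viscosity subsolution of $(\Delta_F)_*[u]=0$ by contradiction: if a test function $\varphi$ touches $w$ from above at $x_{\zero}$ with $(\Delta_F)_*[\varphi](x_{\zero})>0$, one observes that $v_\varphi\=v+\varphi$ is a viscosity supersolution of $F[u]-2c=0$ near $x_{\zero}$, tweaks the zeroth-order term (via $F_C\=F+(C{+}1)s$ and a Lipschitz separator supplied by the Hahn--Katetov--Dowker theorem) so that both $u$ and a small downward shift $v_{\varphi,k}$ solve the \emph{same} equation with the right ordering on a sphere, and then applies the weak comparison principle \cite[Cor 4.8]{Azagra} to get $u\le v_{\varphi,k}$ inside, contradicting $u(x_{\zero})=v_\varphi(x_{\zero})>v_{\varphi,k}(x_{\zero})$. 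Only afterwards is $(\Delta_F)_*$ bounded by $G(x,s,q,Q)=\eM^+_{\l,\L}(x,Q)+C(|s|+|q|)$ and Theorem~\ref{SmP} applied to $-w$. So the weak comparison principle is the \emph{engine} of the subsolution step, not a localization tool; the nonnegative-curvature hypothesis enters precisely because \cite[Cor 4.8]{Azagra} requires it.

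Your route---direct doubling plus the Riemannian theorem on sums---is a legitimate alternative in spirit (it is the Caffarelli--Cabr\'e style), but two points in your sketch are wrong. First, the intermediate claim ``$F(\bar x,u(\bar x),D\varphi(\bar x),D^2\varphi(\bar x))-F(\bar x,v(\bar x),D\varphi(\bar x),D^2\varphi(\bar x))\le 0$'' is not what the doubling yields: the sub/super inequalities are at the jets $(X_\e,Y_\e)$, and what survives the limit is an inequality of the form $\eM^-_{\l,\L}(\bar x,D^2\varphi(\bar x))-C|D\varphi(\bar x)|-C|w(\bar x)|\le 0$, obtained by combining the matrix inequality from the theorem on sums with uniform ellipticity and \ref{(u.l.p.)}; one never evaluates $F$ itself at $D^2\varphi$. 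Second, your invocation of \cite[Cor 4.8]{Azagra} ``to guarantee that the maximizers behave well'' is misplaced: in a direct doubling proof that result plays no role, and the curvature hypothesis instead enters through the Hessian bound on the squared distance in Azagra's theorem on sums. If you pursue the doubling route you should drop the reference to the weak comparison principle and make the jet/matrix-inequality computation explicit; if you want to follow the paper, replace the doubling by the $v+\varphi$ perturbation argument above.
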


Before going ahead with the proof of Proposition \ref{SCP2}, we introduce the {\it linearized operator} $$\D_F: J^2M \to \bR \,\, , \quad \D_F(x,s,q,Q) \= \inf \big\{ F(x,s+\tilde{s},q+\tilde{q},Q+\tilde{Q})-F(x,s,q,Q) : (\tilde{s},\tilde{q},\tilde{Q}) \in J^2_xM \big\}$$ and its lower semicontinuous envelope $(\D_F)_*$, which is defined by $$(\D_F)_*(x,s,q,Q) \= \lim_{r \to 0^+}\!\Big(\inf\big\{\D_F(y,s',q',Q'): \td(x,y)+|s-s'|+|q-L_{xy}^*q'|+|Q-L_{xy}^*Q'|<r\big\}\Big) \,\, .$$ Then, we will need the following

\begin{lemma} Assume the same hypothesis of Proposition \ref{SCP2} and set $w \= u-v$. Then, $w$ is a viscosity subsolution of $(\D_F)_*[u]=0$. \label{lemmaDF} \end{lemma}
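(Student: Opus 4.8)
The plan is to show that $w=u-v$ is a viscosity subsolution of $(\D_F)_*[u]=0$ by testing with a smooth function from above at a point where the gradient does not vanish, and then producing, via the doubling-of-variables technique, test functions for $u$ and for $v$ separately whose jets add up appropriately. Concretely, suppose $\f\in\cC^2(\W,\bR)$ and $\hat x\in\W$ is a local maximum of $w-\f$ with $(D\f)(\hat x)\neq 0$; we must show $(\D_F)_*(\hat x,w(\hat x),(D\f)(\hat x),(D^2\f)(\hat x))\leq 0$. Normalizing so that $\hat x$ is a strict local maximum and working inside a totally normal ball $B(\hat x,\rho)$ with $\rho<\inj(\hat x)$, consider for $\varepsilon>0$ the function
\[
\Psi_\varepsilon(x,y)\= u(x)-v(y)-\f(x)-\frac{1}{2\varepsilon}\td(x,y)^2
\]
on $\ol B(\hat x,\rho)\times\ol B(\hat x,\rho)$, and let $(x_\varepsilon,y_\varepsilon)$ be a maximum point. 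Standard arguments (see \cite[Lemma 3.1]{CIL} adapted to the Riemannian setting as in \cite{Azagra}) give $x_\varepsilon,y_\varepsilon\to\hat x$, $\td(x_\varepsilon,y_\varepsilon)^2/\varepsilon\to 0$, and $u(x_\varepsilon)\to u(\hat x)$, $v(y_\varepsilon)\to v(\hat x)$ as $\varepsilon\to0^+$.

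Next I would apply the Riemannian theorem on sums (the version valid under nonnegative sectional curvature, \cite{Azagra}, which uses the Rauch comparison to control the Hessian of $\td^2$) to the pair $u-\f$ and $-v$ at $(x_\varepsilon,y_\varepsilon)$. This produces $X_\varepsilon\in\Sym^2(T_{x_\varepsilon}M)$, $Y_\varepsilon\in\Sym^2(T_{y_\varepsilon}M)$ with
\[
\big(x_\varepsilon,\,p_\varepsilon,\,X_\varepsilon\big)\in\ol J^{2,+}(u-\f)(x_\varepsilon),\qquad \big(y_\varepsilon,\,L_{x_\varepsilon y_\varepsilon}p_\varepsilon,\,-Y_\varepsilon\big)\in\ol J^{2,-}v(y_\varepsilon),
\]
where $p_\varepsilon=\tfrac1\varepsilon(\n f_{y_\varepsilon})(x_\varepsilon)$ is the appropriate gradient of $\tfrac1{2\varepsilon}\td(\cdot,y_\varepsilon)^2$ at $x_\varepsilon$, and a one-sided bound $X_\varepsilon\leq L_{y_\varepsilon x_\varepsilon}^*Y_\varepsilon L_{y_\varepsilon x_\varepsilon}+ o(1)$ (in the sense of the comparison-of-sums inequality) plus the fact that, modulo parallel transport, $X_\varepsilon$ and $Y_\varepsilon$ differ by a term that stays bounded and has controlled sign thanks to the curvature hypothesis. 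Adding $(D^2\f)(x_\varepsilon)$ back, the subsolution property of $u$ gives $F(x_\varepsilon,u(x_\varepsilon),p_\varepsilon+(D\f)(x_\varepsilon),X_\varepsilon+(D^2\f)(x_\varepsilon))\leq 0$ (note $p_\varepsilon+(D\f)(x_\varepsilon)\neq 0$ for $\varepsilon$ small since $(D\f)(\hat x)\neq0$), while the supersolution property of $v$ gives $F(y_\varepsilon,v(y_\varepsilon),L_{x_\varepsilon y_\varepsilon}^* p_\varepsilon,-Y_\varepsilon)\geq 0$.

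Now I would transport the second inequality back to $x_\varepsilon$ using \ref{(i.u.c.)}, which costs only $\w(\td(x_\varepsilon,y_\varepsilon))\to0$, and then use \ref{(u.l.p.)} to replace $v(y_\varepsilon)$ by $w(x_\varepsilon)+v(x_\varepsilon)$-type quantities up to an error $C(|u(x_\varepsilon)-v(y_\varepsilon)-w(\hat x)|+|p_\varepsilon+(D\f)(x_\varepsilon)-L_{x_\varepsilon y_\varepsilon}^*p_\varepsilon-\ldots|)\to0$; the gradients match up to lower order precisely because $\td(x_\varepsilon,y_\varepsilon)/\varepsilon$ and its transport-defect are $o(1)$. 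Subtracting the two viscosity inequalities, all the $F(x_\varepsilon,\cdot)$ with the $Y_\varepsilon$-jet cancel against the definition of $\D_F$: indeed, writing $(\tilde s,\tilde q,\tilde Q)$ for the difference of the two jets fed to $F$ at $x_\varepsilon$, we obtain
\[
\D_F\big(x_\varepsilon,\,v(x_\varepsilon)+\text{(err)},\,L_{x_\varepsilon y_\varepsilon}^*p_\varepsilon+\text{(err)},\,-Y_\varepsilon\big)\ \leq\ F(x_\varepsilon,u(x_\varepsilon),\ldots,X_\varepsilon+(D^2\f)(x_\varepsilon))-F(x_\varepsilon,\ldots)\ \leq\ o(1),
\]
and the one-sided matrix bound together with properness of $F$ (monotonicity in $Q$) lets me replace $-Y_\varepsilon$ by $X_\varepsilon+(D^2\f)(x_\varepsilon)$, hence by $(D^2\f)(\hat x)$ in the limit. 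Passing to the limit $\varepsilon\to0^+$ and using the very definition of the lower semicontinuous envelope $(\D_F)_*$ — which is exactly designed to absorb the fact that the base point, the scalar/covector/matrix arguments converge only up to parallel transport and up to the errors above — yields $(\D_F)_*(\hat x,w(\hat x),(D\f)(\hat x),(D^2\f)(\hat x))\leq 0$, as desired.

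\textbf{Main obstacle.} The delicate point is the Riemannian theorem on sums: one must invoke it in a form valid on manifolds of nonnegative sectional curvature (this is where the curvature hypothesis of Proposition \ref{SCP2} is genuinely used, via \cite{Azagra} and Rauch comparison controlling $D^2(\tfrac12\td^2)$), and then keep careful track of parallel transport $L_{x_\varepsilon y_\varepsilon}$ in every estimate so that the error terms really are moduli-of-continuity quantities going to $0$. Bookkeeping the gradient being nonzero along the approximation (so that the singular-at-$q=0$ operator $F$ can legitimately be evaluated) is routine but must be stated, since it uses $(D\f)(\hat x)\neq0$ and smallness of $\td(x_\varepsilon,y_\varepsilon)/\varepsilon$.
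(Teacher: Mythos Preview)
Your outline is essentially correct and follows the classical doubling-of-variables route; the paper, however, argues quite differently. Instead of applying the Riemannian theorem on sums directly, the authors proceed by contradiction: assuming $(\D_F)_*[\f](x_{\zero})>0$ at a strict maximum $x_{\zero}$ of $w-\f$, they show that $v_{\f}\=v+\f$ is a viscosity supersolution of $F[u]-2c=0$ on a small ball, then modify $F$ to $F_C(x,s,q,Q)\=F(x,s,q,Q)+(C+1)s$ so as to gain strict monotonicity in $s$, insert a Lipschitz function $h$ between $(C+1)u$ and $(C+1)v_{\f}+c$ via the Hahn--Katetov--Dowker theorem, and finally invoke the weak comparison principle \cite[Cor 4.8]{Azagra} to obtain a contradiction. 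In other words, the paper treats the weak comparison principle as a black box and never opens up the theorem on sums, whereas you are effectively reproving (the relevant part of) that comparison principle by hand. Both approaches rely on the nonnegative sectional curvature in the same place---to control the Hessian of $\tfrac12\td^2$---but the paper's route is more modular (and adapts immediately to the negative-curvature setting of Remark \ref{seccurv} by swapping in \cite[Cor 4.10]{Azagra}), while yours makes more explicit where each structural hypothesis enters.

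Two small remarks on your sketch. First, in the setting of Proposition \ref{SCP2} the operator is $F:J^2M\to\bR$, not $\wt{J}^2M\to\bR$, so there is no singularity at $q=0$ and your bookkeeping about $D\f(\hat x)\neq 0$ is unnecessary. Second, your passage to the limit hinges on the matrix increment $X_\e+D^2\f(x_\e)-(\text{transported }{-}Y_\e)$ staying close to $D^2\f(\hat x)$; this is exactly what the one-sided matrix inequality from the theorem on sums delivers (after using properness to replace $X_\e$ by the transported $-Y_\e$), but you should state precisely which version of the Riemannian sums theorem from \cite{Azagra} you invoke and check that the curvature-dependent error term indeed vanishes in the limit, since this is the point where a careless estimate could fail.
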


Notice that Lemma \ref{lemmaDF} can be obtained by following the proof of \cite[Prop 5.5]{IY}, which is stated in the Euclidean case $(M,g)=(\bR^n,g_{\rm flat})$. However, for convenience of the reader, we provide a proof of this result by following the authors' original approach.

\begin{proof}[Proof of Lemma \ref{lemmaDF}] Arguing by contradiction, we assume that there exist a function $\f \in \cC^2(\W,\bR)$ and a point $x_{\zero} \in \W$ such that $w{-}\f < 0$ in $\W\setminus \{x_{\zero}\}$, $w(x_{\zero})=\f(x_{\zero})$,  and $(\D_F)_*[\f](x_{\zero}) > 0$. By lower semicontinuity, we can fix $c>0$ and find a $\d=\d(c)>0$ such that $\d < \inj(x_{\zero})$, $B(x_{\zero},\d) \subset \W$ and $$(\D_F)_*[\f](x) > 2c \quad \text{ for any $x \in B(x_{\zero},\d)$} \, .$$ Set $v_{\f} \= v + \f \in \LSC(\W)$ and observe that $v_{\f}$ is a viscosity supersolution to $F[u]-2c=0$ on the ball $B(x_{\zero},\d)$. Indeed, given $x \in B(x_{\zero},\d)$ and a function $\psi \in \cC^2(B(x_{\zero},\d),\bR)$ such that $v_{\f}{-}\psi \leq 0$ around $x$ and $v_{\f}(x)=\psi(x)$, from the very definition of $v_{\f}$ we get $v{-}(\psi-\f) \leq 0$ around $x$ and $v(x)=\psi(x)-\f(x)$. Hence, being $v$ a viscosity supersolution of $F[u]=0$ on $\W$, from the very definition of $\D_F$ we get $$F[\psi](x) \geq F(x,\psi(x)-\f(x),D\psi(x)-D\f(x),D^2\psi(x)-D^2\f(x))+(\D_F)_*(x,\f(x), D\f(x),D^2\f( x))\geq 2c \,\, .$$

Let us define now $$F_C: J^2M \to \bR \,\, , \quad F_C(x,s,q,Q) \= F(x,s,q,Q) + (C+1)s \,\, .$$ Then, by hypothesis, it follows that: \begin{itemize} 
\item[$\bcdot$] $v_{\f}$ is a viscosity supersolution of $F_C[u]-(C+1)u-2c=0$ on $\W$;
\item[$\bcdot$] $u$ is a viscosity subsolution of $F_C[u]-(C+1)u=0$ on $\W$.
\end{itemize} Since $u\leq v_{\f}$ and $u \in \USC(\W)$, $v_{\f} \in \LSC(\W)$, by using the Hahn-Katetov-Dowker Theorem (see \cite[Thm 3.4.9]{Borwein}), combined with an approximation argument, it follows that there exists a Lipschitz continuous function $h: \ol{B}(x_{\zero},\d) \to \bR$ such that 
$$(C+1)u(x) \leq h(x) \leq (C+1)v_{\f}(x)+c \quad \text{ for any $x \in \ol{B}(x_{\zero},\d)$ .}$$ Therefore: \begin{itemize} 
\item[$\bcdot$] $v_{\f}$ is a viscosity supersolution of $F_C[u]-h(x)-c=0$ on $B(x_{\zero},\d)$;
\item[$\bcdot$] $u$ is a viscosity subsolution of $F_C[u]-h(x)=0$ on $B(x_{\zero},\d)$.
\end{itemize} Since by hypothesis $u-v_{\f}$ has a strict maximum at $x_{\zero}$ and $u(x_{\zero})=v_{\f}(x_{\zero})$, we can take $k>0$ such that
$$u-v_{\f} \leq -k \,\, \text{ on } S(x_{\zero},\d) \quad \text{ and } \quad (2C+1)k \leq c \,\, .$$ Then, we take $v_{\f,k} \= v_{\f}-k$ so that $v_{\f,k} \geq u$ on $S(x_{\zero},\d)$ and $v_{\f,k}(x_{\zero})=u(x_{\zero})-k<u(x_{\zero})$. Note that, since $F$ is Lipschitz with constant $C$, the operator $F_C$ verifies $$F_C(x,r-k,q,Q)\geq F_C(x,r,q,Q)-(2C+1)k \,\, .$$ Therefore, using then the fact that $v_{\f}$ is a viscosity supersolution of $F_C[u]-h(x)-c=0$ on $B(x_{\zero},\d)$ and the inequality $-(2C+1)k\geq -c$, we conclude that the perturbation $v_{\f,k}$ is a viscosity supersolution to $F_C[u]-h(x)=0$ on $B(x_{\zero},\d)$. It is now immediate to observe that the operator $F_C$ verifies the hypotheses of the weak Comparison Principle \cite[Cor 4.8]{Azagra}. In particular, it satisfies condition (1) in \cite[Cor 4.8]{Azagra} (with $\g=1$), since \ref{(u.l.p.)} implies that $r \mapsto F_C(x,r,q,Q)-r$ is non-decreasing on $\bR$. Then, by applying \cite[Cor 4.8]{Azagra}, we can conclude that $v_{\f,k}\geq u$ in $B(x_{\zero},\d)$, which leads to the contradiction $k\leq0$.
\end{proof}

Finally, we are ready to prove Proposition \ref{SCP2}.

\begin{proof}[Proof of Proposition \ref{SCP2}] By Lemma \ref{lemmaDF}, we know that $w$ is a viscosity subsolution of $(\D_F)_*[u]=0$. Then, we consider $G: J^2M \to \bR$ given by $G(x,s,q,Q) \= \eM^+_{\l,\L}(x,Q)+C(|s|+|q|)$ and we observe that $$-(\D_F)_*(x,-s,-q,-Q)\leq G(x,s,q,Q) \quad \text{ for any $(x,s,q,Q) \in J^2M$ . }$$ Hence, it follows that $-w$ is a supersolution of $G[u]=0$. Since $G$ is continuous, uniformly elliptic and positively 1-homogeneous, the result is a consequence of Theorem \ref{SmP}. \end{proof}

\begin{remark} The result holds true for equations driven by the normalized $p$-Laplacian on Riemannian manifold under the assumptions of Proposition \ref{SCP2}. \end{remark}

\begin{remark}\label{seccurv} The assumption on the sectional curvature of $(M,g)$ can be dropped at the expenses of assuming a further uniform continuity assumption in the variable $x$ and $D^2u$ (see \cite[Cor 4.10]{Azagra}). More precisely, Proposition \ref{SCP2} holds true in a general Riemannian manifold, without curvature conditions, if we replace the intrinsic uniform continuity \ref{(i.u.c.)} by: \begin{itemize}[label=(i.u.c.'), leftmargin=40pt]
\item[(i.u.c.')] for any $\e>0$ there exists $\d>0$ such that for any $x,y \in M$ with $\td(x,y) < \min\{\inj(x),\inj(y)\}$ it holds \beq \td(x,y) \leq \d \,\, , \quad -\d I \leq P-L_{xy}^*Q \leq \d I \quad \Longrightarrow \quad \big|F(y,s,q,Q)-F(x,s,L_{xy}^*q,P)\big| \leq \e \label{condAz} \eeq for any $(s,q,P) \in J^2_yM$, $Q \in \Sym^2(T^*_xM)$.
\end{itemize} \end{remark}

\bigskip\bigskip
\font\smallsmc = cmcsc8
\font\smalltt = cmtt8
\font\smallit = cmti8
\hbox{\parindent=0pt\parskip=0pt
\vbox{\baselineskip 9.5 pt \hsize=5truein
\obeylines
{\smallsmc
{\rm \small (Alessandro Goffi)}
\smallskip
Dipartimento di Matematica ``Tullio Levi-Civita'', Universit$\scalefont{0.55}{\text{\Aac}}$ di Padova
Via Trieste 63, 35121 Padova, Italy}
\smallskip
{\smallit E-mail adress}\/: {\smalltt alessandro.goffi@math.unipd.it}
}
}

\bigskip\bigskip
\font\smallsmc = cmcsc8
\font\smalltt = cmtt8
\font\smallit = cmti8
\hbox{\parindent=0pt\parskip=0pt
\vbox{\baselineskip 9.5 pt \hsize=5truein
\obeylines
{\smallsmc
{\rm \small (Francesco Pediconi)}
\smallskip
Dipartimento di Matematica e Informatica ``Ulisse Dini'', Universit$\scalefont{0.55}{\text{\Aac}}$ di Firenze
Viale Morgagni 67/A, 50134 Firenze, Italy}
\smallskip
{\smallit E-mail adress}\/: {\smalltt francesco.pediconi@unifi.it}
}
}

%\bibliography{biblio_smpRiemannian}

\begin{thebibliography}{10}

\bibitem{AMR} \textsc{L. J. Al\'{\i}as, P. Mastrolia, M. Rigoli}, Maximum principles and geometric applications, Springer Monographs in Mathematics, {\it Springer, Cham}, 2016.

\bibitem{AMP} \textsc{P. Antonini, D. Mugnai, P. Pucci}, {\it Quasilinear elliptic inequalities on complete Riemannian manifolds}, J. Math. Pures Appl. {\bf 87} (2007), 582--600.

\bibitem{Azagra} \textsc{D. Azagra, J. Ferrera, B. Sanz}, {\it Viscosity solutions to second order partial differential equations on Riemannian manifolds}, J. Differential Equations {\bf 245} (2008), 307--336.

\bibitem{BCD} \textsc{M. Bardi, I. Capuzzo Dolcetta}, Optimal control and viscosity solutions of Hamilton-Jacobi-Bellman equations, Systems \& Control: Foundations \& Applications, {\it Birkh\"auser Boston, Inc., Boston, MA}, 1997.

\bibitem{BC} \textsc{M. Bardi, A. Cesaroni}, {\it Liouville properties and critical value of fully nonlinear elliptic operators}, J. Differential Equations {\bf 261} (2016), 3775--3799.

\bibitem{BDL1} \textsc{M. Bardi, F. Da Lio}, {\it On the strong maximum principle for fully nonlinear degenerate elliptic equations}, Arch. Math. (Basel) {\bf 73} (1999), 276--285.

\bibitem{BDL2} \textsc{M. Bardi, F. Da Lio}, {\it Propagation of maxima and strong maximum principle for viscosity solutions of degenerate elliptic equations. I. Convex operators}, Nonlinear Anal. {\bf 44} (2001), 991--1006.

\bibitem{BDL3} \textsc{M. Bardi, F. Da Lio}, {\it Propagation of maxima and strong maximum principle for viscosity solutions of degenerate elliptic equations. II. Concave operators}, Indiana Univ. Math. J. {\bf 52} (2003), 607--627.

\bibitem{BG1} \textsc{M. Bardi, A. Goffi}, {\it New strong maximum and comparison principles for fully nonlinear degenerate elliptic PDEs}, Calc. Var. Partial Differential Equations {\bf 58} (2019), paper no. 184.

\bibitem{BM} \textsc{M. Bardi, P. Mannucci}, {\it On the Dirichlet problem for non-totally degenerate fully nonlinear elliptic equations}, Commun. Pure Appl. Anal. {\bf 5} (2006), 709--731.

\bibitem{BB} \textsc{G. Barles, J. Busca}, {\it Existence and comparison results for fully nonlinear degenerate elliptic equations without zeroth-order term}, Comm. Partial Differential Equations {\bf 26} (2001), 2323--2337.

\bibitem{Besse} \textsc{A. L. Besse}, Einstein manifolds, {\it Springer-Verlag, Berlin}, 2008.

\bibitem{BMPR} \textsc{B. Bianchini, L. Mari, P. Pucci, M. Rigoli}, {\it On the interplay among maximum principles, compact support principles and Keller-Osserman conditions on manifolds}, {\tt arXiv:1801.02102} (2018).

\bibitem{Biri} \textsc{I. Birindelli, F. Demengel}, {\it Comparison principle and Liouville type results for singular fully nonlinear operators}, Ann. Fac. Sci. Toulouse Math. {\bf 13} (2004), 261--287.

\bibitem{BGI1} \textsc{I. Birindelli, G. Galise, H. Ishii}, {\it A family of degenerate elliptic operators: maximum principle and its consequences}, Ann. Inst. H. Poincar\'{e} Anal. Non Lin\'{e}aire {\bf 35} (2018), 417--441.

\bibitem{BGI2} \textsc{I. Birindelli, G. Galise, H. Ishii}, {\it Positivity sets of supersolutions of degenerate elliptic equations and the strong maximum principle}, to appear in Trans. Amer. Math. Soc. (DOI: {\tt 10.1090/tran/8226}).

\bibitem{Borwein} \textsc{J. M. Borwein, J. D. Vanderwerff}, Convex functions: constructions, characterizations and counterexamples. Encyclopedia of Mathematics and its Applications 109, {\it Cambridge University Press, Cambridge}, 2010.

\bibitem{BBI} \textsc{D. Burago, Y. Burago, S. Ivanov}, A course in Metric Geometry, {\it American Mathematical Society}, {\it Providence, RI}, 2001.

\bibitem{C} \textsc{X. Cabr\'{e}}, {\it Nondivergent elliptic equations on manifolds with nonnegative curvature}, Comm. Pure Appl. Math. {\bf 50} (1997), 623--665.

\bibitem{CLN} \textsc{L. A. Caffarelli, Y. Li, L. Nirenberg}, {\it Some remarks on singular solutions of nonlinear elliptic equations III: viscosity solutions including parabolic operators}, Comm. Pure Appl. Math. {\bf 66} (2013), 109--143.

\bibitem{CC} \textsc{L. A. Caffarelli, X.~Cabr\'{e}}, Fully nonlinear elliptic equations. American Mathematical Society Colloquium Publications 43, {\it American Mathematical Society, Providence, RI}, 1995.

\bibitem{Calabi} \textsc{E. Calabi}, {\it An extension of E. Hopf's maximum principle with an application to Riemannian geometry}, Duke Math. J. {\bf 25} (1958), 45--56.

\bibitem{Ric} \textsc{B. Chow, S. C. Chu, D. Glickenstein, C. Guenther, J. Isenberg, T. Ivey, D. Knopf, P. Lu, F. Luo, and L. Ni}, The Ricci flow: techniques and applications. Part II: Analytic Aspects. Mathematical Surveys and Monographs 144, {\it American Mathematical Society, Providence, RI}, 2008.

\bibitem{CIL} \textsc{M. G. Crandall, H. Ishii, P. L. Lions}, {\it User's guide to viscosity solutions of second order partial differential equations}, Bull. Amer. Math. Soc. (N.S.) {\bf 27} (1992), 1--67.

\bibitem{Giga} \textsc{Y. Giga}, Surface evolution equations. A level set approach. Monographs in Mathematics 99, {\it Birkh\"{a}user Verlag, Basel}, 2006.

\bibitem{GO} \textsc{Y. Giga, M. Ohnuma}, {\it On strong comparison principle for semicontinuous viscosity solutions of some nonlinear elliptic equations}, Int. J. Pure Appl. Math. {\bf 22} (2005), 165--184.

\bibitem{GT} \textsc{D. Gilbarg, N. S. Trudinger}, Elliptic partial differential equations of second order. Grundlehren der Mathematischen Wissenschaften 224, {\it Springer-Verlag, Berlin}, 1983.

\bibitem{Gry} \textsc{A. Grigor'yan}, {\it Analytic and geometric background of recurrence and non-explosion of the Brownian motion on Riemannian manifolds}, Bull. Amer. Math. Soc. (N.S.) {\bf 36} (1999), 135--249.

\bibitem{Grybook} \textsc{A. Grigor'yan}, Heat kernel and analysis on manifolds. AMS/IP Studies in Advanced Mathematics 47, {\it American Mathematical Society, Providence, RI}, {\it International Press, Boston, MA}, 2009.

\bibitem{HL11} \textsc{F. R. Harvey, H. B. Lawson Jr.}, {\it Dirichlet duality and the nonlinear Dirichlet problem on Riemannian manifolds}, J. Differential Geom. {\bf 88} (2011), 395--482.

\bibitem{HLsmp} \textsc{F. R. Harvey, H. B. Lawson Jr.}, {\it Characterizing the strong maximum principle for constant coefficient subequations},  Rend. Mat. Appl. {\bf 37} (2016), 63--104.

\bibitem{IY} \textsc{Y. Ishii, Y. Yoshimura}, {\it Demi-eigenvalues for uniformly elliptic Isaacs operators}, unpublished manuscript.

\bibitem{Je2} \textsc{R. Jensen}, {\it Uniqueness criteria for viscosity solutions of fully nonlinear elliptic partial differential equations}, Indiana Univ. Math. J. {\bf 38} (1989), 629--667.

\bibitem{Manf} \textsc{P. Juutinen, P. Lindqvist, J. J. Manfredi}, {\it On the equivalence of viscosity solutions and weak solutions for a quasi-linear equation}, SIAM J. Math. Anal. {\bf 33} (2001), 699--717.

\bibitem{KK} \textsc{B. Kawohl, N. Kutev}, {\it Strong maximum principle for semicontinuous viscosity solutions of nonlinear partial differential equations}, Arch. Math. (Basel) {\bf 70} (1998), 470--478.

\bibitem{KK2} \textsc{B. Kawohl, N. Kutev}, {\it Comparison principle for viscosity solutions of fully nonlinear, degenerate elliptic equations}, Comm. Partial Differential Equations {\bf 32} (2007), 1209--1224.

\bibitem{Kim} \textsc{S. Kim}, {\it Harnack inequality for degenerate and singular operators of $p$-Laplacian type on Riemannian manifolds}, Math. Ann. {\bf 366} (2016), 1721--1785.

\bibitem{LiWang} \textsc{Y. Li, B. Wang}, {\it Strong comparison principles for some nonlinear degenerate elliptic equations}, Acta Math. Sci. Ser. B {\bf 38} (2018), 1583--1590.

\bibitem{MariPessoa} \textsc{L. Mari, L. Pessoa}, {\it Duality between Alfhors-Liouville and Khas'minskii properties for nonlinear equations}, to appear in Comm. Anal. Geom. (DOI: {\tt 10.4310/CAG.2020.v28.n2.a6}).

\bibitem{MV} \textsc{L. Mari, D. Valtorta}, {\it On the equivalence of stochastic completeness and Liouville and Khas'minskii conditions in linear and nonlinear settings}, Trans. Amer. Math. Soc. {\bf 365} (2013), 4699--4727.

\bibitem{OS} \textsc{M. Ohnuma, S. Sakaguchi}, {\it A simple proof of a strong comparison principle for semicontinuous viscosity solutions of the prescribed mean curvature equation}, Nonlinear Anal. {\bf 181} (2019), 180--188.

\bibitem{Patrizi} \textsc{S. Patrizi}, {\it Principal eigenvalues for Isaacs operators with Neumann boundary conditions}, NoDEA Nonlinear Differential Equations Appl. {\bf 16} (2009), 79--107.

\bibitem{Ped} \textsc{F. Pediconi}, {\it A local version of the Myers-Steenrod theorem}, to appear in Bull. Lond. Math. Soc. (DOI: {\tt 10.1112/blms.12367}).

\bibitem{PPS} \textsc{L. F. Pessoa, S. Pigola, A. G. Setti}, {\it Dirichlet parabolicity and $L^1$-Liouville property under localized geometric conditions}, J. Funct. Anal. {\bf 273} (2017), 652--693.

\bibitem{PRS} \textsc{S. Pigola, M. Rigoli, A. G. Setti}, {\it Maximum principles on Riemannian manifolds and applications}, Mem. Amer. Math. Soc. {\bf 174} (2005).

\bibitem{PW} \textsc{M. H. Protter, H. F. Weinberger}, Maximum principles in differential equations, {\it Springer-Verlag, New York}, 1984.

\bibitem{Pucci66} \textsc{C. Pucci}, {\it Operatori ellittici estremanti}, Ann. Mat. Pura Appl. {\bf 72} (1966), 141--170.

\bibitem{PS2} \textsc{P. Pucci, J. Serrin}, {\it The strong maximum principle revisited} J. Differential Equations {\bf 196} (2004), 1--66.

\bibitem{PS} \textsc{P. Pucci, J. Serrin}, The maximum principle. Progress in Nonlinear Differential Equations and their Applications 73, {\it Birkh\"{a}user Verlag, Basel}, 2007.

\bibitem{Punzo} \textsc{F. Punzo}, {\it Liouville theorems for fully nonlinear elliptic equations on spherically symmetric Riemannian manifolds}, NoDEA Nonlinear Differential Equations Appl. {\bf 20} (2013),1295--1315.

\bibitem{Sakai} \textsc{T. Sakai}, {\it On continuity of injectivity radius function}, Math. J. Okayama Univ. {\bf 25} (1983), 91--97.

\bibitem{Tru} \textsc{N. S. Trudinger}, {\it Comparison principles and pointwise estimates for viscosity solutions of nonlinear elliptic equations}, Rev. Mat. Iberoamericana {\bf 4} (1988), 453--468.

\bibitem{Xu18} \textsc{S. Xu}, {\it Local estimate on convexity radius and decay of injectivity radius in a Riemannian manifold}, Commun. Contemp. Math. {\bf 20} (2018).

\end{thebibliography}
%\bibliographystyle{abbrv}
\end{document}